\theoremstyle{plain}
\newtheorem{Theorem}{Theorem}
\newtheorem{Lemma}{Lemma}
\newtheorem{Claim}{Claim}
\newtheorem{case}{Case}
\newtheorem{Proposition}{Proposition}
\newtheorem{Problem}{Problem}
\newtheorem{Conjecture}{Conjecture}
\theoremstyle{nonumberplain}
\theoremstyle{nonumberplain}
\title{Globally balancing spanning trees}
\author{Florian H\"orsch}
\newcommand{\stf}{spanning tree factorization }
\begin{document}
\maketitle
\begin{abstract}
We show that for every graph $G$ that contains two edge-disjoint spanning trees, we can choose two edge-disjoint spanning trees $T_1,T_2$ of $G$ such that $|d_{T_1}(v)-d_{T_2}(v)|\leq 5$ for all $v \in V(G)$. We also prove the more general statement that for every positive integer $k$, there is a constant $c_k \in O(\log k)$ such that for every graph $G$ that contains $k$ edge-disjoint spanning trees, we can choose $k$ edge-disjoint spanning trees $T_1,\ldots,T_k$ of $G$ satisfying $|d_{T_i}(v)-d_{T_j}(v)|\leq c_k$ for all $v \in V(G)$ and $i,j \in \{1,\ldots,k\}$. This resolves a conjecture of Kriesell.
\end{abstract}

\section{Introduction}
This paper deals with the factorization of graphs into spanning trees that satisfy certain extra conditions. All undefined notations can be found in Section \ref{prel}.

 In 1961, Nash-Williams \cite{nw} and Tutte \cite{t} independently proved a fundamental theorem characterizing the graphs which contain a fixed number of edge-disjoint spanning trees. Since then, several results have been found which extend this theorem. For example, while it does not extend to infinite graphs in general, it has been generalized to certain classes of infinite graphs by Lehner \cite{l} and Stein \cite{s}. Also, a packing of spanning trees of a given size of minimum weight with respect to a given weight function on the edges can be computed efficiently using matroid theory.

 The extensions we are most interested in are those that impose further restrictions on the spanning trees we wish to pack. Here spanning trees of bounded diameter have been considered by Chuzoy, Parter and Tan \cite{cpt}. Also, spanning trees which satisfy a certain equilibrium condition stating that the deletion of a designated vertex should not leave a graph in which almost all vertices are contained in a single connected component have been considered by Bang-Jensen, Havet and Yeo \cite{bhy} and Bessy et al. \cite{bhmrs}.

In this article, we deal with a different balance condition imposed on the spanning trees which was first considered by Kriesell \cite{nk1}. We wish to know whether we can find a packing of spanning trees of a given graph $G$ such that for every $v \in V(G)$ the degree of $v$ is roughly the same in each of the spanning trees. There is one remarkable difference between the problems considered by Chuzoy et al., Bang-Jensen et al. and Bessy et al. and the class of problems considered in this article. For the former problems, it is not difficult to see that there are graphs that admit a packing of spanning trees of a given size but do not admit such a packing if the extra condition is imposed. In our problem, we wish to understand whether for every graph that contains a packing of spanning trees of a certain size, these spanning trees can be chosen so to satisfy the balance condition.

Clearly, when trying to give a positive answer to this question, we may restrict to graphs whose edge set only consists of the union of the edge sets of the spanning trees in the packing as additional edges can only make the task easier. More formally, we say that a graph $G$ is a {\it $k$-multiple tree} if there is a collection of spanning trees $(T_1,\ldots,T_k)$ of $G$ such that $E(T_i)\cap E(T_j)=\emptyset$ for all $i,j \in \{1,\ldots,k\}$ with $i \neq j$ and $\bigcup_{i \in \{1,\ldots,k\}}E(T_i)=E(G)$. We abbreviate a $2$-multiple tree to a {\it double tree}. Further, we call $(T_1,\ldots,T_k)$ a {\it \stf} of $G$. By the remark above, all results on balanced spanning tree packings proven for $k$-multiple trees also hold more generally for graphs containing $k$ edge-disjoint spanning trees. For the sake of simplicity, we give them in the form restricted to $k$-multiple trees.

%We wish to mention that positive results can be obtained if we impose milder conditions on the spanning trees. For example, the following result requiring the spanning trees only to have a local balance condition can be obtained combining a constrcutive charcterizationof $k$-tree graphs by Frank \cite{} and reconfiguration arguments.
%\begin{Proposition}
%Let $G$ be a $k$-tree graph and $x \in V(G)$. Then there is a \stf $(T_1,\ldots,T_k)$ of $G$ such that $|d_{T_i}(x)-d_{T_j}(x)|\leq 1$ for all $i,j \in \{1,\ldots,k$ with $i \neq j$.
%\end{Proposition}
%Also the following result which was proved by Ellingham et al. shows that we can always find a single spanning tree whose vertex degrees are rougly proportional to the vertex degrees in the whole graph.

%\begin{Theorem}
%Let $G$ be a connected graph. Then $G$ has a spanning tree $T$ such that $d_T(x)\leq \le$
%\end{Theorem}
In \cite{nk1}, Kriesell focused on the case $k=2$ and showed that a rather mild balance condition can always be achieved. Namely, he proved the following result showing that every double tree has a \stf such that a vertex that is  a leaf in one of the spanning trees in the factorization is of bounded degree in the whole graph. 
\begin{Theorem} \cite{nk1}\label{feuille}
Let $G$ be a double tree. Then $G$ has a \stf $(T_1,T_2)$ such that every vertex $v \in V(G)$ that is a leaf of one of $T_1$ and $T_2$ satisfies $d_G(v)\leq 8$. 
\end{Theorem}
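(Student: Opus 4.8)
The plan is to prove Theorem~\ref{feuille} by a careful exchange argument, starting from an arbitrary \stf $(T_1,T_2)$ and iteratively swapping edges to reduce the degree, in the whole graph, of any vertex that is a leaf of one of the two trees. Since $G$ is a double tree, $|E(G)| = 2(n-1)$ where $n = |V(G)|$, so the average degree is slightly below $4$; the content of the theorem is that we can localize the balancing so that the \emph{specific} vertices which are leaves in one tree do not absorb an excessive share of the edges. I would measure progress via a potential function, say $\Phi(T_1,T_2) = \sum_{v} f(d_G(v))$ restricted to (or weighted toward) the set $L$ of vertices that are leaves in $T_1$ or $T_2$, for a suitably convex $f$, or alternatively a lexicographic potential on the sorted degree sequence of the leaf-vertices; the goal is to show that whenever some leaf vertex $v$ has $d_G(v) \geq 9$, there is an edge swap that strictly decreases $\Phi$ while preserving the property of being a \stf.

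The key structural tool is the standard fact about tree factorizations: if $e = xy \in E(T_1)$, then adding $e$ to $T_2$ creates a unique cycle $C$ in $T_2 + e$, and for any edge $f \in E(C)$ with $f \neq e$, the pair $(T_1 - e + f,\, T_2 + e - f)$ is again a \stf. So the first step is: take a leaf vertex $v$ with large $d_G(v)$; without loss of generality $v$ is a leaf of $T_1$, so $d_{T_1}(v) = 1$ and hence $d_{T_2}(v) = d_G(v) - 1 \geq 8$. I want to move one of the $T_2$-edges at $v$ out of $T_2$ and replace it by a $T_1$-edge that is \emph{not} at $v$, thereby reducing $d_G(v)$ by one. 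Concretely, pick an edge $e = vu \in E(T_2)$; look at the fundamental cycle $C$ that $e$ forms with $T_1$. If some edge $f \in E(C) \setminus \{e\}$ is not incident to $v$, then swapping gives a new \stf in which $d_G(v)$ drops by one (we removed the $G$-edge $vu$ from consideration only in the sense that... wait — no, $vu$ stays in $G$, it just moves from $T_2$ to $T_1$, so $d_G(v)$ is unchanged). So the naive single swap does not change $d_G(v)$ at all; this forces a more global argument.

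Hence the real approach must be: a vertex $v$ with $d_G(v)$ large is, crudely, "stealing" edges that some low-degree vertex needs, and I should reroute a whole alternating path. The right framework is likely the following. Consider the bipartite-like exchange graph on \stf s and look for an augmenting structure that, along a path from $v$ to some vertex $w$ of small $G$-degree, performs a sequence of swaps whose net effect is to decrease $d_{T_1}(v)$ or $d_{T_2}(v)$ — but $v$ is already a leaf of $T_1$, so we cannot decrease $d_{T_1}(v)$; we must decrease $d_{T_2}(v)$, and since $d_{T_1}(v)=1$, decreasing $d_{T_2}(v)$ genuinely decreases $d_G(v)$. To decrease $d_{T_2}(v)$ we take $e=vu\in E(T_2)$, form its fundamental cycle $C$ with $T_1$; if \emph{every} edge of $C-e$ is incident to $v$ then $C$ has length at most... actually $C-e$ incident to $v$ forces $C$ to be a triangle $v,u,u'$ with $vu' , uu' \in E(T_1)$, a degenerate case. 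Generically we can pick $f\in E(C-e)$ not at $v$ and swap, but then the removed $T_1$-edge $f = ab$ might create a new leaf or shift the problem; we then repeat from $a$ or $b$. The termination of this cascade is the heart of the matter.

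The hard part, and what the proof must really nail down, is designing the potential so that this cascade terminates with the bound $8$ rather than something weaker, and handling the degenerate short-cycle cases (triangles where $C-e$ lies at $v$). I would set up a potential like $\Phi = \sum_{v \in V(G)} 2^{d_G(v)} \cdot [\,v \text{ is a leaf of } T_1 \text{ or } T_2\,]$ or, more robustly, use a two-level lexicographic ordering: first minimize the multiset $\{d_G(v) : v \text{ is a leaf of some } T_i\}$ in the dominance order, and among those minimize the number of leaf-vertices. The main lemma to extract is: \emph{if $(T_1,T_2)$ minimizes $\Phi$ and $v$ is a leaf of some $T_i$ with $d_G(v)\geq 9$, then there is a swap, or short sequence of swaps, yielding $(T_1',T_2')$ with $\Phi(T_1',T_2')<\Phi(T_1,T_2)$ — a contradiction}. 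Proving this lemma requires showing that the fundamental cycle $C$ of a chosen $T_2$-edge at $v$ must, because $d_G(v)$ is large while the ambient graph is sparse ($2n-2$ edges), contain a vertex $w$ whose $G$-degree is small enough that moving an edge onto it does not increase the potential; a counting argument comparing $d_G(v)$ against the total edge budget along $C$ should do this, and the constant $8$ (rather than a larger number) comes from optimizing the slack in that count together with the degenerate triangle cases. I expect the bookkeeping around which endpoint of the removed $T_1$-edge becomes problematic, and the case where the vertex $w$ found on $C$ is itself already a leaf of the other tree, to be the fiddly parts.
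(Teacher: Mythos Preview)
First, a point of context: the paper does \emph{not} give its own proof of Theorem~\ref{feuille}. That theorem is quoted from Kriesell~\cite{nk1}, and the paper's contribution is the stronger Theorem~\ref{main} (the bound $|d_{T_1}(v)-d_{T_2}(v)|\le 5$), proved by a minimum-counterexample analysis combined with a discharging argument exploiting $|E(G)|=2|V(G)|-2$. Theorem~\ref{main} implies Theorem~\ref{feuille} (in fact with the constant $8$ improved to $7$), so if you want ``the paper's proof'' of this statement, it is that implication, not a direct exchange argument.

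Turning to your proposal itself: there is a recurring conceptual slip that would sink the argument as written. You repeatedly aim to ``reduce $d_G(v)$'' or say that ``decreasing $d_{T_2}(v)$ genuinely decreases $d_G(v)$''. But $G$ is fixed and $d_G(v)=d_{T_1}(v)+d_{T_2}(v)$ is invariant under any swap; the only thing a swap can change is how $d_G(v)$ is split between the two trees. The correct objective is not to lower $d_G(v)$ but to ensure that $v$ ceases to be a leaf of $T_1$, i.e.\ to push $d_{T_1}(v)$ from $1$ up to at least $2$, without creating a new high-degree leaf elsewhere. Your own paragraph briefly catches this (``$vu$ stays in $G$\ldots so $d_G(v)$ is unchanged'') and then immediately reverts to the mistaken framing; the potential $\Phi = \sum_v 2^{d_G(v)}[\,v\text{ is a leaf}\,]$ you propose would never change under swaps except through the indicator, so it degenerates to something like ``number of high-degree leaves,'' which is much coarser than what you describe.

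Once that is fixed, the swap-cascade idea (take $e=vu\in E(T_2)$, form its fundamental cycle with $T_1$, pick $f$ on that cycle away from $v$, swap, then chase any new high-degree leaf created at an endpoint of $f$) is a reasonable \emph{shape} for an argument, and is in the spirit of Kriesell's original paper. But your proposal contains no actual mechanism forcing termination, no explanation of where the constant $8$ comes from, and the ``counting argument comparing $d_G(v)$ against the total edge budget along $C$'' is a placeholder rather than an argument. Note also that your analysis of the degenerate case is off: since $v$ is a leaf of $T_1$, the $T_1$-path from $v$ to $u$ uses exactly one edge at $v$, so $C\setminus\{e\}$ automatically contains an edge not incident to $v$ unless $C$ is a digon, not a triangle. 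In short, the direction is not unreasonable but the proposal as it stands is a brainstorm with a basic error, not a proof sketch; the paper's route (structural lemmas on a minimal counterexample plus discharging) is quite different and does not pass through an explicit potential on factorizations.
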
 
He also asked whether the constant in Theorem \ref{feuille} can be improved.

\begin{Problem}\cite{nk1} \label{dggfd}
What is the minimum integer $c$ for which every double tree $G$ has a \stf $(T_1,T_2)$ such that every vertex $v \in V(G)$ that is a leaf of one of $T_1$ and $T_2$ satisfies $d_G(v)\leq c$?
\end{Problem}

Theorem \ref{feuille} shows that $c \leq 8$. Kriesell also gave a construction in \cite{nk1} showing that $c \geq 4$.

Further, he posed the following much more general conjecture.

\begin{Conjecture}\cite{nk1} \label{ddfd}
For every integer $k$, there exists an integer $c_k$ such that every $k$-multiple tree has a \stf $(T_1,\ldots,T_k)$ satisfying $|d_{T_i}(v)-d_{T_j}(v)|\leq c_k$ for all $v \in V(G)$ and $i,j \in \{1,\ldots,k\}$.
\end{Conjecture}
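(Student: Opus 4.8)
The plan is to proceed by divide and conquer, recursively splitting a $k$-multiple tree into two multiple trees of roughly half the size while keeping the edge distribution balanced at every vertex. For a multiple tree $H$ that admits a \stf into $m$ spanning trees, call $d_H(v)/m$ the \emph{budget} of $v$ in $H$; this is the average degree assigned to $v$ by a spanning tree in such a factorization. The heart of the argument is the following statement, which I will call the Key Lemma: there is a universal constant $b$ such that every $m$-multiple tree $G$ with $m\geq 2$ admits a partition $E(G)=E_1\cup E_2$ for which $G[E_1]$ is a $\lceil m/2\rceil$-multiple tree, $G[E_2]$ is a $\lfloor m/2\rfloor$-multiple tree, and
\[
\left| \frac{d_{E_1}(v)}{\lceil m/2\rceil} - \frac{d_{E_2}(v)}{\lfloor m/2\rfloor}\right|\leq b \quad \text{for all } v\in V(G).
\]
In words, the two halves are required to receive at every vertex a number of edges proportional to their sizes up to an additive error independent of $m$ and of $G$; this is a degree-balanced refinement of the Nash-Williams--Tutte decomposition. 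Note that the case $m=2$ of the Key Lemma is exactly the assertion that every double tree has a \stf $(T_1,T_2)$ with $|d_{T_1}(v)-d_{T_2}(v)|\leq b$, i.e.\ the quantitative form of Conjecture \ref{ddfd} for $k=2$.

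Granting the Key Lemma, the conjecture follows easily. Apply the Key Lemma to $G$, then to each of the two resulting multiple trees, and so on, until every piece is a single spanning tree; since the number of trees roughly halves at each step, this produces a rooted binary tree $\mathcal{T}$ of depth at most $\lceil\log_2 k\rceil$ whose leaves are spanning trees $T_1,\ldots,T_k$ forming a \stf of $G$. The key observation is that the budget of a fixed vertex $v$ changes by at most $b$ along every edge of $\mathcal{T}$: the budget of an internal node is the convex combination of the budgets of its two children weighted by their sizes, and those two budgets differ by at most $b$ by the Key Lemma, so each child's budget is within $b$ of the parent's. Hence for every leaf $T_i$ the budget of $v$ in $T_i$, which is simply $d_{T_i}(v)$, differs from the budget of $v$ at the root, namely $d_G(v)/k$, by at most $b\lceil\log_2 k\rceil$. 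Consequently $|d_{T_i}(v)-d_{T_j}(v)|\leq 2b\lceil\log_2 k\rceil$ for all $v$ and all $i,j$, so one may take $c_k=2b\lceil\log_2 k\rceil\in O(\log k)$.

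It remains to prove the Key Lemma, and this is where the real work lies. A partition of the required \emph{combinatorial} type always exists: pick any \stf of $G$ and let $E_1,E_2$ be the unions of the edge sets of the first $\lceil m/2\rceil$ and the last $\lfloor m/2\rfloor$ trees. Starting from such a partition, the strategy is to repair unbalanced vertices one at a time by symmetric exchanges: if $v$ receives too few edges in $E_1$, pick an edge $e=uv\in E_2$; adding $e$ to a spanning tree on the $E_1$-side that avoids $v$ creates a cycle, some edge $f$ of that cycle joins the two sides of $E_1-e$, and then $e$ can be moved into $E_1$ and $f$ out of it with both sides remaining multiple trees. The difficulty — and the reason a naive potential argument does not immediately close — is that such an exchange can worsen the balance at the endpoints of $f$, so the repairs may cascade. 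To control this I would either design a potential such as $\sum_v (d_{E_1}(v)\lfloor m/2\rfloor - d_{E_2}(v)\lceil m/2\rceil)^2$ together with a choice of the exchange that provably decreases it, or reformulate the task as a degree-constrained common-basis problem for the $\lceil m/2\rceil$-fold matroid union of the cycle matroid of $G$ with a partition-type matroid encoding the admissible degree windows, and invoke a degree-constrained matroid decomposition theorem. The small cases $m\in\{2,3\}$, where the halving in the recursion leaves no slack to absorb errors, will likely require a dedicated treatment; the case $m=2$ is the double-tree case itself, and I expect its exchange argument — forcing a bounded imbalance simultaneously at \emph{every} vertex — to be the single most delicate point of the whole proof.
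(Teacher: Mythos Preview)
Your recursion on top of the Key Lemma is sound and would give $c_k\in O(\log k)$, but the Key Lemma itself is not proved, and that is where the entire difficulty lies. The exchange sketch you offer runs into precisely the cascading obstruction you name, and it is not clear that any natural potential (quadratic or otherwise) strictly decreases under a well-chosen swap; the matroid reformulation is vaguer still, since two-sided per-vertex degree windows do not obviously fit into a matroid-union framework. You correctly identify the $m=2$ case as the crux --- it \emph{is} the hardest part --- but you give no argument for it. For even $m$ your Key Lemma is actually an easy corollary of the $m=2$ case (pair up the trees of an arbitrary factorization, rebalance each pair, and let $E_1,E_2$ collect the first and second members of the pairs; the budgets then differ by at most $5$), but for odd $m$ no such one-liner is available, and your Key Lemma with a \emph{universal} constant $b$ appears to be strictly stronger than anything the paper establishes.

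The paper's route is different and sidesteps your Key Lemma entirely. After settling $k=2$ (Theorem~\ref{main}) by a discharging argument on a minimal counterexample, it never attempts a balanced split of the edge set. Instead it applies Theorem~\ref{main} repeatedly to \emph{pairs of spanning trees} inside an existing factorization. The doubling step (Lemma~\ref{uhhu}) takes an \emph{arbitrary} partition of a $2k$-multiple tree into two $k$-multiple trees, balances each half inductively, then pairs one tree from each half and rebalances every pair via Theorem~\ref{main}; the initial split need not be balanced because the cross-pairing absorbs the discrepancy. For odd $k+1$, Lemma~\ref{baum} peels off a single well-balanced tree by iterating the two-tree rebalancing and passing to a limit, and Lemma~\ref{rdztd} feeds this back into the induction. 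Together these give the $k$-feasible constant $11\log k$ (Theorem~\ref{tech}) and hence $c_k=22\log k$. The moral is the opposite of your plan: rather than splitting the edge set carefully, split it arbitrarily and repair afterwards with the two-tree theorem.
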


Our main contribution is the confirmation of Conjecture \ref{ddfd}.
\medskip

 First, we show the following result that solves Conjecture \ref{ddfd} for $k=2$.

\begin{Theorem}\label{main}
Let $G$ be a double tree. Then there is a spanning tree factorization $(T_1,T_2)$ of $G$ such that $|d_{T_1}(v)-d_{T_2}(v)|\leq 5$ for all $v \in V(G)$.
\end{Theorem}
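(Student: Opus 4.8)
The natural framework is matroid exchange. Writing $M=M(G)$ for the cycle matroid of $G$, a \stf of the double tree $G$ is precisely a base $T_1$ of $M$ whose complement $T_2=E(G)\setminus T_1$ is again a base, i.e.\ a common base of $M$ and its dual $M^{*}$; note the two ranks agree since $|E(G)|=2(n-1)$. I would argue by contradiction from an extremal factorization. Among all factorizations $(T_1,T_2)$ of $G$, pick one minimizing $D:=\max_{v\in V(G)}|d_{T_1}(v)-d_{T_2}(v)|$; among those, minimize the number of vertices attaining the value $D$; and, as a tie-breaker, minimize $\sum_v (d_{T_1}(v)-d_{T_2}(v))^{2}$. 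Suppose $D\geq 6$ and fix $v$ with, say, $d_{T_1}(v)-d_{T_2}(v)=D$. Since $T_2$ is spanning we have $d_{T_2}(v)\geq 1$, hence $d_{T_1}(v)\geq D+1\geq 7$, so $v$ has many $T_1$-edges available.

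The elementary move is a symmetric exchange: for $e=vu\in T_1$ we want $f=xy\in T_2$ with $v\notin\{x,y\}$ such that both $T_1'=T_1-e+f$ and $T_2'=T_2-f+e$ are spanning trees; the admissible $f$ are exactly the $T_2$-edges that join the two components of $T_1-e$ and simultaneously lie on the $T_2$-path between the ends of $e$, and basis exchange guarantees at least one such $f$ for every $e$. Such a move changes the discrepancy $d_{T_1}(\cdot)-d_{T_2}(\cdot)$ by $-2$ at $v$ and at $u$ and by $+2$ at $x$ and at $y$ (with partial cancellation when $u\in\{x,y\}$). So it always strictly decreases $|d_{T_1}(v)-d_{T_2}(v)|$; the task is to steer the choice of $e$ and $f$ so that it also does not create a new extreme vertex, thereby contradicting the extremal choice. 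When a single exchange cannot avoid this, the intended remedy is to route the surplus of $v$ along a short chain of exchanges (each interior exchange being degree-neutral at its two interior vertices, so that it only transports the discrepancy) from $v$ to a vertex whose discrepancy is small enough to absorb $\pm 2$.

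The main obstacle is exactly this routing/choice step: one must show that if \emph{every} admissible exchange and every short chain starting at $v$ is blocked, then the $T_1$-neighbourhood of $v$ together with the relevant fundamental cuts and $T_2$-paths is forced to consist of vertices whose discrepancy is already extreme and of the wrong sign in a very rigid pattern; I expect that pushing this far enough exhibits an edge subset violating the Nash--Williams count $|F|\le 2\bigl(r_{M}(F)\bigr)$, contradicting that $G$ is a double tree, or else directly produces a factorization beating the extremal one. Finally, the constant is $5$ rather than $4$ because $d_{T_1}(v)-d_{T_2}(v)$ has the same parity as $d_G(v)$, so no uniform even bound is attainable; allowing one unit of slack over the even target is what makes the chain argument go through while still forcing $D\leq 5$.
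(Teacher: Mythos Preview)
Your proposal is a strategy outline, not a proof: the decisive step---showing that if every single exchange and every ``short chain'' of exchanges starting from an extreme vertex $v$ is blocked, then some structural contradiction ensues---is explicitly left at the level of ``I expect.'' This is precisely where the work lies, and the triple potential-function minimization does not substitute for it. In particular, you have not argued why a blocked configuration around $v$ would violate a Nash--Williams count; the inequality $|F|\le 2\,r_M(F)$ holds for every $F\subseteq E(G)$ simply because $G$ is a double tree, and identifying a specific $F$ that fails requires detailed local structure you have not established. Nor is it clear that ``short'' chains suffice, or that the interior of a chain can always be made degree-neutral as you assert.

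The paper's approach is quite different. It does not work with an extremal factorization of a fixed graph; instead it takes a vertex-minimal counterexample $G$ and, via a sequence of reductions (deleting a $2$-vertex, contracting through a $3$-vertex, merging two $2$-neighbours of a big vertex, etc.), derives constraints on the neighbourhoods of small-degree vertices: every $2$-vertex has two big neighbours, every big vertex has at most one $2$-neighbour, every $3$-vertex has at least two big neighbours, and so on. The contradiction then comes from a discharging argument exploiting $|E(G)|=2|V(G)|-2$, not from matroid exchange on the fixed $G$. The paper also remarks that arguments in this style yield the bound $6$ fairly readily, while pushing to $5$ requires an additional, rather delicate lemma about ``critical $8$-vertices''; your sketch gives no indication of how (or whether) the chain-routing idea would reach $5$ rather than merely $6$.

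Finally, your parity remark does not give a lower bound. That $d_{T_1}(v)-d_{T_2}(v)\equiv d_G(v)\pmod 2$ is correct, but it does not preclude a uniform bound of $4$: for odd-degree $v$ this would simply force $|d_{T_1}(v)-d_{T_2}(v)|\le 3$. The only lower bound recorded in the paper is $c\ge 2$, coming from a construction of Kriesell, and whether $5$ can be improved is left open.
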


Observe that Theorem \ref{main} improves on Theorem \ref{feuille} in two ways. Apart from confirming Conjecture \ref{ddfd} for $k=2$, it also implies that the constant $c$ in Problem \ref{dggfd} is at most 7.

Surprisingly, while the confirmation of Conjecture \ref{ddfd} for $k=2$ seems to be the much stronger of these two achievements, this can actually be obtained by arguments which are similar to those used by Kriesell in \cite{nk1} to prove Theorem \ref{feuille}. More concretely, a version of Theorem \ref{main} where the constant $5$ is replaced by $6$ can be obtained by altering the arguments in \cite{nk1}. On the other hand, for the improvement of this constant some more sophisticated refinements are required. Our proof of Theorem \ref{main} is based on a discharging argument relying on the fact that every double tree $G$ satisfies $|E(G)|=2|V(G)|-2$. 
\medskip

After, we confirm Conjecture \ref{ddfd} in general. More precisely, we show the following result.

\begin{Theorem}\label{newmain}
Let $G$ be a $k$-multiple tree. Then there is a \stf $(T_1,\ldots,T_k)$ of $G$ such that $|d_{T_i}(v)-d_{T_j}(v)|\leq 22\log(k)$ for all $v \in V(G)$ and $i,j \in \{1,\ldots,k\}$.
\end{Theorem}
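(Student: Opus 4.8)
The plan is to prove Theorem \ref{newmain} by a divide-and-conquer recursion driven by a \emph{splitting lemma}. The splitting lemma I would aim for reads: for every $k$-multiple tree $G$ with $k\geq 2$ and all positive integers $a,b$ with $a+b=k$, one can partition $E(G)$ into sets $E_1,E_2$ such that $(V(G),E_1)$ is an $a$-multiple tree, $(V(G),E_2)$ is a $b$-multiple tree, and $\left|\frac{d_{E_1}(v)}{a}-\frac{d_{E_2}(v)}{b}\right|\leq\beta$ for every $v\in V(G)$, where $\beta$ is an absolute constant. For $a=b=1$ this is exactly Theorem \ref{main}, so $\beta\le 5$ there, and one should aim for a constant of the same order in general. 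Given such a lemma, I would apply it repeatedly in a balanced way: split $G$ into a $\lceil k/2\rceil$-multiple tree and a $\lfloor k/2\rfloor$-multiple tree, then recurse on each part, stopping at single spanning trees; this produces a spanning tree factorization $(T_1,\ldots,T_k)$ associated with a binary recursion tree of depth at most $\lceil\log_2 k\rceil$.

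For the error analysis I would track the quantity $g(k):=\max_{v,i}\left|d_{T_i}(v)-\frac{d_G(v)}{k}\right|$. The key observation is that when $G$ splits into $E_1$ (an $a$-multiple tree) and $E_2$ (a $b$-multiple tree), the value $\frac{d_G(v)}{k}=\frac ak\cdot\frac{d_{E_1}(v)}{a}+\frac bk\cdot\frac{d_{E_2}(v)}{b}$ is a convex combination of $\frac{d_{E_1}(v)}{a}$ and $\frac{d_{E_2}(v)}{b}$, hence lies between them, hence is within $\beta$ of both by the splitting lemma. Combining this with the inductive bound inside each part gives $g(k)\leq g(\lceil k/2\rceil)+\beta$, and since $g(1)=0$ this yields $g(k)\leq\beta\lceil\log_2 k\rceil$. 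Because $|d_{T_i}(v)-d_{T_j}(v)|\leq 2g(k)$, the imbalance is at most $2\beta\lceil\log_2 k\rceil$, which with a suitable $\beta$ and a little care about the ceilings is at most $22\log k$.

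The real work is the splitting lemma, and this is where I expect the main obstacle. I would set it up through matroid language: the valid splits $E_1$ are exactly the common bases of the $a$-fold union $M_a$ of the cycle matroid of $G$ and of the dual of the $b$-fold union $M_b$; at least one exists because $G$, being a $k$-multiple tree, decomposes into $k$ spanning trees, and because $|E(G)|=a(|V(G)|-1)+b(|V(G)|-1)$ equals the sum of the ranks. Among all valid splits I would pick one minimising the ``variance'' $\Phi:=\sum_{v}\bigl(d_{E_1}(v)-\tfrac ak d_G(v)\bigr)^2$, and then argue that no vertex can be badly imbalanced in it: if $d_{E_1}(v_0)$ were far above its fair share $\tfrac ak d_G(v_0)$, I would recolour edges along an exchange (a single symmetric swap $E_1-e+f$ with $e\in E_1$ incident to $v_0$ and $f\in E_2$ not incident to $v_0$, or, if forced, a short alternating sequence of such swaps dictated by the matroid intersection exchange graph) so as to decrease $d_{E_1}(v_0)$ while changing all other degrees by $O(1)$, thereby decreasing $\Phi$ --- a contradiction.

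The delicate point is to guarantee that such a degree-improving recolouring always exists. Common bases of two matroids are not connected under single swaps in general, so one may be pushed into serial exchanges, and one must then control the net effect of the whole exchange on every vertex, not merely on $v_0$. I expect this to require a structural analysis of the \emph{tight} vertex sets of $G$, i.e.\ sets $S$ with $|E(G[S])|=k(|S|-1)$, inside which the numbers of $E_1$- and $E_2$-edges are completely forced: the claim would be that an obstruction to rebalancing $v_0$ exhibits such a tight set whose forced split is nevertheless compatible with a local rebalancing, or else it violates the Nash--Williams counting condition for $G$. Finally, squeezing the constant to something compatible with $22\log k$ (rather than merely $O(\log k)$) may need the discharging-style bookkeeping already used for Theorem \ref{main}, which enters the recursion at its base.
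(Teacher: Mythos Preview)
Your recursive scheme is sound \emph{provided} the splitting lemma holds with an absolute constant $\beta$: the error propagation $g(k)\le g(\lceil k/2\rceil)+\beta$ is correct and would yield the bound. The gap is that the splitting lemma itself remains unproved. Your sketch --- minimise $\Phi$ over all valid splits and argue that a badly imbalanced vertex admits a $\Phi$-decreasing exchange --- is the natural attack, but as you yourself note, the set of common bases of two matroids is in general not connected under single swaps, so one may be forced onto a long alternating path in the exchange graph, and you give no argument bounding its length or controlling the cumulative degree change at vertices other than $v_0$. The appeal to tight sets is a plausible heuristic but is left as a conjecture in your writeup; nothing you wrote forces such a set to yield a contradiction. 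As it stands the proposal is a programme, not a proof.

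The paper avoids the splitting lemma entirely by reversing the order of operations. For the doubling step, given a $2k$-multiple tree it splits \emph{arbitrarily} into two $k$-multiple trees, recurses on each to obtain balanced factorizations $(U_1',\ldots,U_k')$ and $(U_{k+1}',\ldots,U_{2k}')$, and only then pairs $U_i'$ with $U_{k+i}'$ and applies Theorem~\ref{main} to each resulting double tree; a triangle-inequality calculation turns a $k$-feasible constant $c_k$ into a $2k$-feasible constant $c_k+\tfrac52$ (Lemma~\ref{uhhu}). For the step $k\to k+1$ it uses a different device (Lemma~\ref{baum}): starting from any factorization, iteratively rebalance a distinguished tree $T_1$ by alternately applying the inductive $k$-feasibility to $G-E(T_1)$ and Theorem~\ref{main} to $T_1$ together with one of the resulting trees; the resulting recursion on $d_{T_1^{j}}(v)$ is an affine contraction with fixed point $d_G(v)/(k+1)$, so after finitely many steps $T_1$ has the right degree at every vertex. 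Combining these two lemmas gives $11\log k$ as a $k$-feasible constant and hence the stated bound $22\log k$, using no structural input beyond Theorem~\ref{main}.
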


The main idea of the proof of Theorem \ref{newmain} is to start with an arbitrary \stf of $G$ and then repeatedly improve this \stf by applying Theorem \ref{main} a significant number of times to pairs of spanning trees contained in the packing. This method allows to gradually obtain more and more balanced spanning tree factorizations. 
\medskip

In Section \ref{prel}, we formally define our notation and collect some preliminary results. In Section \ref{alt}, we prove Theorem \ref{main}.  In Section \ref{neu}, we prove Theorem \ref{newmain}. Finally, we conclude our work and give some directions for further research in Section \ref{conc}.
\section{Preliminaries}\label{prel}

We first give some basic notions of graph theory. We often use $x$ for a single element set $\{x\}$. For a graph $G$, we let $V(G)$ and $E(G)$ denote the vertex set and the edge set of $G$, respectively. For some edge $e=uv \in E(G)$, we say that $u$ and $v$ are {\it incident} to $e$ and that $e$ {\it links} $u$ and $v$. For some $X \subseteq V$, we use $\delta_G(X)$ to denote the set of edges in $E(G)$ that link a vertex in $X$ and a vertex in $V(G)-X$. We use $d_G(X)$ for $|\delta_G(X)|$. For some $v \in V(G)$, we define $G-v$ by $V(G-v)=V(G)-v$ and $E(G-v)=E(G)-\delta_G(v)$ and for some $e \in E(G)$, we define $G-e$ by $V(G-e)=V(G)$ and $E(G-e)=E(G)-e$. Further, for a new edge $e=uv \notin E(G)$ with $u,v \in V(G)$ we define $G+e$ by $V(G+e)=V(G)$ and $E(G+e)=E(G)+e$ and for a new edge $e=uv \notin E(G)$ with $u \in V(G)$ and $v \notin V(G)$ we define $G+e$ by $V(G+e)=V(G)+v$ and $E(G+e)=E(G)+e$. A graph $H$ is called a {\it subgraph} of another graph $G$ if $V(H)\subseteq V(G)$ and $E(H)\subseteq E(G)$.

A {\it tree} is a connected graph that does not contain any cycles. Given a graph $G$, a subgraph $T$ of $G$ is called a {\it spanning tree} of $G$ if $T$ is a tree satisfying $V(T)=V(G)$. Recall that if a graph $G$ contains $k$ edge-disjoint spanning trees $T_1,\ldots, T_k$ with $E(T_1)\cup \ldots \cup E(T_k)=E(G)$, we say that $G$ is a {\it $k$-multiple tree} and $(T_1,\ldots,T_k)$ is a {\it \stf}\ of $G$. Further recall that a $2$-multiple tree is also called a {\it double tree}.

We now give some preliminary results we need for our proof of Theorem \ref{main}. The following statement is an immediate consequence of a well-known property of trees. It will be crucial for the discharging procedure in Section \ref{alt}.
\begin{Proposition}\label{aretes}
Every double tree satisfies $|E(G)|=2|V(G)|-2$.
\end{Proposition}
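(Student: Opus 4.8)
The plan is to unwind the definition of a double tree and invoke the standard fact about the number of edges in a tree. First I would recall that, by definition, a double tree $G$ admits a \stf $(T_1,T_2)$, that is, two spanning trees of $G$ with $E(T_1)\cap E(T_2)=\emptyset$ and $E(T_1)\cup E(T_2)=E(G)$.

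Next I would use the well-known property alluded to in the text: every tree on $n$ vertices has exactly $n-1$ edges. Applying this to the spanning trees $T_1$ and $T_2$, each of which has vertex set $V(G)$, yields $|E(T_1)|=|E(T_2)|=|V(G)|-1$.

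Finally, since $E(T_1)$ and $E(T_2)$ are disjoint and together exhaust $E(G)$, we obtain
\[
|E(G)|=|E(T_1)|+|E(T_2)|=2(|V(G)|-1)=2|V(G)|-2,
\]
which is the claim. There is no real obstacle here; the only ingredient beyond bookkeeping is the edge count of a tree, which is exactly the ``well-known property of trees'' the statement is said to follow from, so it may simply be cited rather than reproved.
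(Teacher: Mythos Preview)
Your argument is correct and is exactly what the paper intends: it states the proposition without proof, noting only that it is an immediate consequence of the well-known fact that a tree on $n$ vertices has $n-1$ edges, which is precisely the ingredient you use.
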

The following simple observation shows that when checking if a \stf satisfies the conclusion of Theorem \ref{main}, it suffices to do so for vertices of large degree. This fact will be used frequently in Section \ref{alt}.
\begin{Proposition}\label{trivial}
Let $G$ be a double tree and $(T_1,T_2)$ a \stf of $G$ such that $|d_{T_1}(v)-d_{T_2}(v)|\leq 5$ for all $v \in V(G)$ with $d_G(v)\geq 8$. Then $|d_{T_1}(v)-d_{T_2}(v)|\leq 5$ for all $v \in V(G)$.
\end{Proposition}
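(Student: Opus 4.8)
The plan is to prove the contrapositive-style statement directly: assume $(T_1,T_2)$ is a \stf with $|d_{T_1}(v)-d_{T_2}(v)|\le 5$ for every $v$ with $d_G(v)\ge 8$, and show the same bound holds for every $v$ with $d_G(v)\le 7$. So fix $v\in V(G)$ with $d_G(v)\le 7$. First I would use the trivial identity $d_{T_1}(v)+d_{T_2}(v)=d_G(v)$, which holds because $(T_1,T_2)$ is a \stf and hence $\delta_G(v)$ is partitioned into $\delta_{T_1}(v)$ and $\delta_{T_2}(v)$.

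From $d_{T_1}(v)+d_{T_2}(v)=d_G(v)\le 7$ and $d_{T_1}(v),d_{T_2}(v)\ge 0$, the largest possible value of $|d_{T_1}(v)-d_{T_2}(v)|$ is attained when one of the two terms is as large as possible and the other as small as possible. Since each $T_i$ is a spanning tree, $v$ is not isolated in $T_i$ (assuming $|V(G)|\ge 2$, which we may, as otherwise the statement is vacuous), so $d_{T_i}(v)\ge 1$ for $i=1,2$. Hence $d_{T_1}(v)\le d_G(v)-1\le 6$ and likewise $d_{T_2}(v)\le 6$, so $|d_{T_1}(v)-d_{T_2}(v)|\le 6-1=5$. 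This gives the bound for all $v$ with $1\le d_G(v)\le 7$; vertices with $d_G(v)=0$ cannot occur in a connected graph on at least two vertices, and the one-vertex case is trivial. Combining this with the hypothesis for $d_G(v)\ge 8$ yields the conclusion for all $v\in V(G)$.

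The only point requiring any care is the use of $d_{T_i}(v)\ge 1$, i.e.\ that every vertex of a spanning tree has positive degree; this is immediate from connectedness of $T_i$ together with $|V(G)|\ge 2$. I do not anticipate any genuine obstacle here — the statement is essentially the observation that a bounded total degree forces a bounded degree difference — so the ``main step'' is simply recording the degree-sum identity and the lower bound $d_{T_i}(v)\ge 1$ correctly.
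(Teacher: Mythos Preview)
Your proof is correct and follows essentially the same approach as the paper: use the degree-sum identity $d_{T_1}(v)+d_{T_2}(v)=d_G(v)$ together with $d_{T_i}(v)\ge 1$ from the spanning-tree property to bound the difference by $6-1=5$ when $d_G(v)\le 7$. The paper does not explicitly mention the $|V(G)|\ge 2$ caveat, but otherwise your argument matches it step for step.
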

\begin{proof}
It suffices to prove that $|d_{T_1}(v)-d_{T_2}(v)|\leq 5$ for all $v \in V(G)$ with $d_G(v)\leq 7$. Observe that as $T_1$ is a spanning tree of $G$, we have that $d_{T_1}(v)\geq 1$. As $(T_1,T_2)$ is a \stf of $G$, we obtain $d_{T_2}(v)=d_G(v)- d_{T_1}(v)\leq 7-1=6$. This yields $d_{T_2}(v)-d_{T_1}(v)\leq 6-1=5$. Similarly, we have $d_{T_1}(v)-d_{T_2}(v)\leq 5$, so $|d_{T_1}(v)-d_{T_2}(v)|\leq 5$.
\end{proof}
The following well-known exchange property of spanning trees can be found in a stronger form as Theorem 5.3.3  in \cite{book}.
\begin{Proposition}\label{map}
Let $G$ be a double tree and let $(T_1,T_2)$ be a \stf of $G$. Then there is a function $\sigma:E(T_1)\rightarrow E(T_2)$ such that $(T_1-e+\sigma(e),T_2-\sigma(e)+e)$ is a \stf of $G$ for all $e \in E(T_1)$.
\end{Proposition}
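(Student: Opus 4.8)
The plan is to treat each edge $e\in E(T_1)$ separately. The statement only requires that $(T_1-e+\sigma(e),T_2-\sigma(e)+e)$ be a spanning tree factorization for each individual $e$, so the function $\sigma$ need not be injective; it therefore suffices to exhibit, for every $e\in E(T_1)$, one edge $f\in E(T_2)$ that works, and then set $\sigma(e):=f$.

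Fix $e=uv\in E(T_1)$. First I would record what a valid replacement must satisfy on each side. On the $T_1$-side, deleting $e$ splits $T_1$ into two components; let $A\subseteq V(G)$ be the vertex set of the one containing $u$, so that $\delta_{T_1}(A)=\{e\}$. Then for any edge $f\in E(G)$ with $f\neq e$, the graph $T_1-e+f$ has $|V(G)|-1$ edges and is a spanning tree of $G$ if and only if $f\in\delta_G(A)$, since it is connected exactly when $f$ reconnects the two parts. On the $T_2$-side, since $e\notin E(T_2)$, let $P$ be the unique $u$--$v$ path in the spanning tree $T_2$; for any edge $f\in E(P)$, deleting $f$ from $T_2$ separates $u$ from $v$, so $T_2-f+e$ is connected, has $|V(G)|-1$ edges, and hence is a spanning tree of $G$.

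The key point is to choose $f$ lying in both relevant sets at once. The path $P$ starts at $u\in A$ and ends at $v\notin A$, so it contains at least one edge $f$ with exactly one endpoint in $A$, i.e.\ $f\in E(P)\cap\delta_G(A)$. Since $f\in E(P)\subseteq E(T_2)$ and $e\in E(T_1)$, we have $f\neq e$. Setting $\sigma(e):=f$, the two observations above show that $T_1-e+\sigma(e)$ and $T_2-\sigma(e)+e$ are both spanning trees of $G$. It remains to check that they form a factorization: their edge sets are $(E(T_1)\setminus\{e\})\cup\{f\}$ and $(E(T_2)\setminus\{f\})\cup\{e\}$, which are disjoint (using $E(T_1)\cap E(T_2)=\emptyset$ together with $f\in E(T_2)$ and $e\in E(T_1)$) and have union $E(T_1)\cup E(T_2)=E(G)$. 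Carrying this out for every $e\in E(T_1)$ defines the desired $\sigma$.

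I do not expect a genuine obstacle here; this is a standard tree/matroid exchange argument. The only step needing a little care is the parity observation that a $u$--$v$ path in $T_2$ must cross the cut $\delta_G(A)$, and the realization that a single edge of $P$ lying in that cut serves simultaneously as a valid replacement on the $T_1$-side (it is in the cut) and on the $T_2$-side (it lies on the fundamental cycle of $e$ with respect to $T_2$).
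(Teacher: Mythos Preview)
Your proof is correct: the fundamental-cut/fundamental-cycle argument you give is the standard one, and every step checks out. The paper does not actually supply a proof of this proposition; it simply cites it as a consequence of Theorem~5.3.3 in \cite{book}. Your direct argument is exactly the elementary content behind that citation, so there is nothing to compare beyond noting that you have written out what the paper leaves as a reference.
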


We call a function like in Proposition \ref{map} a {\it tree-mapping} function from $T_1$ to $T_2$. We now show two important properties of tree-mapping functions that will be useful in Section \ref{alt}.
\begin{Proposition}\label{trivial2}
Let $G$ be a double tree, $(T_1,T_2)$ a \stf of $G$ and $x \in V(G)$ such that $x$ is incident to a unique edge $e$ in $T_1$. Then for any tree-mapping function $\sigma:E(T_1)\rightarrow E(T_2)$, we have $\sigma(e)\in \delta_G(x)$.
\end{Proposition}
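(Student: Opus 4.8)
The plan is to exploit the single defining property of a tree-mapping function together with the fact that $x$ is a leaf of $T_1$. First I would observe that, since $x$ is incident to the unique edge $e$ in $T_1$, deleting $e$ from $T_1$ leaves $x$ as an isolated vertex; more precisely, because $T_1$ is a tree, $T_1-e$ has exactly two connected components, one of which is $\{x\}$ and the other of which contains $V(G)-x$.

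Next I would apply the definition of $\sigma$ to this particular edge $e$: by Proposition \ref{map} (and the definition of a tree-mapping function), the pair $(T_1-e+\sigma(e),T_2-\sigma(e)+e)$ is a \stf of $G$, so in particular $T_1-e+\sigma(e)$ is a spanning tree of $G$ and hence connected. Since $x$ is an isolated vertex of $T_1-e$, adding the single edge $\sigma(e)$ can only yield a connected graph if $\sigma(e)$ is incident to $x$; otherwise $x$ would remain isolated in $T_1-e+\sigma(e)$. Therefore $\sigma(e)\in\delta_G(x)$, which is exactly the claim.

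I do not expect any genuine obstacle here: the argument is a direct consequence of the definitions. The only point requiring a moment's care is the standard structural fact that deleting an edge of a tree produces exactly two connected components, and that for a leaf-edge one of these components is a single vertex; everything else follows immediately from the connectivity of the spanning tree $T_1-e+\sigma(e)$.
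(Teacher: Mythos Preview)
Your proposal is correct and follows essentially the same approach as the paper: both arguments use that $T_1-e+\sigma(e)$ is a spanning tree of $G$ while $x$ is incident to no edge of $T_1-e$, forcing $\sigma(e)$ to be the edge in $\delta_G(x)$. The paper phrases this by noting that $(E(T_1)-e)\cap\delta_G(x)=\emptyset$ whereas you phrase it in terms of $x$ being isolated in $T_1-e$, but the content is identical.
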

\begin{proof}
As $T_1-e+\sigma(e)$ is a spanning tree of $G$, there is at least one edge $f \in( E(T_1)-e+\sigma(e))\cap \delta_G(x)$. As $(E(T_1)-e)\cap \delta_G(x)= \emptyset$ by assumption, we have $f=\sigma(e)$.
\end{proof}
\begin{Proposition}\label{triangle}
Let $G$ be a double tree, $(T_1,T_2)$ a \stf of $G$ and $xyz$ a triangle in $G$ such that $xy,xz \in E(T_1)$ and $yz \in E(T_2)$. Then for any tree-mapping function $\sigma:E(T_1)\rightarrow E(T_2)$, we have either $\sigma(xy)\neq yz$ or $\sigma(xz)\neq yz$.
\end{Proposition}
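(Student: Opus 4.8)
The plan is to argue by contradiction: suppose that $\sigma(xy)=yz$ and $\sigma(xz)=yz$ hold simultaneously, and derive an impossible configuration. The only facts about $\sigma$ I will use are those guaranteed by its being a tree-mapping function, applied to the edges $xy,xz\in E(T_1)$: namely that $(T_1-xy+yz,\,T_2-yz+xy)$ and $(T_1-xz+yz,\,T_2-yz+xz)$ are both spanning tree factorizations of $G$. In particular, both $T_2-yz+xy$ and $T_2-yz+xz$ are spanning trees of $G$, hence connected.

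Next I would exploit the structure of $T_2$ around the edge $yz$. Since $T_2$ is a tree and $yz\in E(T_2)$, the graph $T_2-yz$ is a forest with exactly two components; let $A$ be the one containing $y$ and $B$ the one containing $z$, so $\{A,B\}$ partitions $V(G)$, with $y\in A$, $z\in B$, and $\delta_{T_2}(A)=\{yz\}$. Now $T_2-yz+xy$ is obtained from the two-component forest $T_2-yz$ by adding the single edge $xy$; since the result is connected, $xy$ must join the two components, i.e.\ $xy\in\delta_G(A)$, and as $y\in A$ this forces $x\in B$. Applying the identical reasoning to $T_2-yz+xz$, and using $z\in B$, we get $xz\in\delta_G(A)$ and hence $x\in A$. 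This contradicts $A\cap B=\emptyset$, so $\sigma(xy)\neq yz$ or $\sigma(xz)\neq yz$, as claimed.

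The argument is short and I do not anticipate a serious obstacle; the two points to handle carefully are extracting exactly the right consequence from the definition of a tree-mapping function (that \emph{both} $T_2-yz+xy$ and $T_2-yz+xz$ are spanning trees), and the elementary fact that deleting a single edge from a tree yields precisely two components, one containing each endpoint. It is worth noting that the triangle hypothesis and the condition $xy,xz\in E(T_1)$ enter only to make $\sigma(xy)$ and $\sigma(xz)$ well defined and to cast $x$ as the common endpoint of two $T_1$-edges opposite the $T_2$-edge $yz$; if one had the stronger fact that $\sigma$ may be taken to be a bijection the statement would be immediate, but since only the existence of a function is assumed, the cut argument above is what is needed.
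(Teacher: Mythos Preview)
Your proof is correct and uses essentially the same idea as the paper: both arguments split $T_2-yz$ into its two components and exploit the position of $x$ to show that at least one of $T_2-yz+xy$, $T_2-yz+xz$ fails to be connected. The only cosmetic difference is that the paper argues directly (assuming without loss of generality that $x$ lies in the component of $y$, it concludes $\sigma(xy)\neq yz$), whereas you phrase the same cut argument as a proof by contradiction.
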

\begin{proof}
Observe that $T_2-yz$ has exactly two connected components $C_y,C_z$ with $y \in V(C_y)$ and $z \in V(C_z)$. By symmetry, we may suppose that $x \in V(C_y)$. This yields that no edge leaves $V(C_z)$ in $T_2-yz+xy$, so $T_2-yz+xy$ is not a spanning tree of $G$. As $\sigma$ is a tree-mapping function, we obtain that $\sigma(xy)\neq yz$.
\end{proof}

For the proof of Conjecture \ref{ddfd} in Section \ref{neu}, we need some basic results of analysis. By $\log$, we refer to the logarithm for the basis $2$. We denote the set of nonnegative integers by $\mathbb{N}$. For all other basic notions, see \cite{w}. The first result we need is well-known.
\begin{Proposition}\label{gzigzi}
Let $c_1,c_2 \in \mathbb{R}$ with $0<c_1<1$ and $(x_n)_{n \in \mathbb{N}}$ a series of real numbers satisfying $x_n=c_1x_{n-1}+c_2$ for all $n \geq 1$. Then $(x_n)_{n \in \mathbb{N}}$ converges and $\lim_{n \rightarrow \infty}x_n=\frac {c_2}{1-c_1}$.
\end{Proposition}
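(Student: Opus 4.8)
The plan is to exhibit the explicit closed form of $x_n$ and then take the limit. First I would observe that $L := \frac{c_2}{1-c_1}$ is the unique fixed point of the affine map $t \mapsto c_1 t + c_2$, which is well-defined since $c_1 \neq 1$. Setting $y_n := x_n - L$, the recurrence $x_n = c_1 x_{n-1} + c_2$ together with $L = c_1 L + c_2$ gives, by subtraction, $y_n = c_1 y_{n-1}$ for all $n \geq 1$. Hence by a trivial induction $y_n = c_1^n y_0$, i.e. $x_n = L + c_1^n (x_0 - L)$ for all $n \in \mathbb{N}$.

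Next I would invoke the standard fact that $c_1^n \to 0$ as $n \to \infty$ whenever $0 < c_1 < 1$; indeed $(c_1^n)_{n \in \mathbb{N}}$ is a decreasing sequence bounded below by $0$, hence convergent to some limit $\ell \geq 0$, and passing to the limit in $c_1^{n+1} = c_1 \cdot c_1^n$ forces $\ell = c_1 \ell$, so $\ell = 0$ since $c_1 \neq 1$. Therefore $c_1^n (x_0 - L) \to 0$, and consequently $x_n \to L = \frac{c_2}{1-c_1}$, which is exactly the claim; in particular $(x_n)_{n \in \mathbb{N}}$ converges.

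There is essentially no obstacle here: the only mild point of care is ensuring $1 - c_1 \neq 0$ so that $L$ is defined, which is guaranteed by the hypothesis $c_1 < 1$, and the convergence $c_1^n \to 0$ uses $0 < c_1 < 1$ (the hypothesis $c_1 > 0$ is not even needed for convergence, only $|c_1| < 1$, but the stated hypotheses certainly suffice). This is a routine exercise in elementary analysis, and the substitution $y_n = x_n - L$ is the one standard trick that makes it immediate.
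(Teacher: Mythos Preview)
Your proof is correct and entirely standard. Note, however, that the paper does not actually prove this proposition: it is introduced with the phrase ``The first result we need is well-known'' and stated without proof. Your argument via the substitution $y_n = x_n - L$ reducing to a geometric sequence is the textbook approach and would serve perfectly well if a proof were required.
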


We next  need the following simple result which we prove for the sake of completeness.
\begin{Proposition}\label{ddaqd}
Let $(x_n)_{n \in \mathbb{N}}$ be a converging series of reals and $x=\lim_{n \rightarrow \infty}x_n$. Further, let $(d_n)_{n \in \mathbb{N}}$ be a series of reals such that $d_n \geq x_n$ and $d_n \in S$ for some finite set $S$ for all $n \in \mathbb{N}$. Then there is some $n_0 \in \mathbb{N}$ such that $d_n \geq x$ for all $n \geq n_0$.
\end{Proposition}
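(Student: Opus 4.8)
The plan is to argue by a straightforward use of the finiteness of $S$ together with the definition of convergence. The only subtlety is to extract from finiteness a uniform positive gap between $x$ and those elements of $S$ that lie strictly below $x$.

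First I would dispose of the trivial case: if every element $s \in S$ satisfies $s \geq x$, then since $d_n \in S$ we immediately get $d_n \geq x$ for all $n \in \mathbb{N}$, and we may take $n_0 = 0$. So from now on I would assume that the set $S' = \{s \in S : s < x\}$ is nonempty. Since $S'$ is a nonempty finite set of reals each strictly less than $x$, the quantity $\varepsilon := x - \max_{s \in S'} s$ is well defined and strictly positive; note that by construction every $s \in S'$ satisfies $s \leq x - \varepsilon$.

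Next I would invoke convergence $x_n \to x$ with this $\varepsilon$: there is some $n_0 \in \mathbb{N}$ such that $|x_n - x| < \varepsilon$, and in particular $x_n > x - \varepsilon$, for all $n \geq n_0$. I would then fix an arbitrary $n \geq n_0$ and show $d_n \geq x$ by contradiction. Suppose $d_n < x$. Since $d_n \in S$, this means $d_n \in S'$, hence $d_n \leq x - \varepsilon$ by the previous paragraph. But the hypothesis $d_n \geq x_n$ combined with $x_n > x - \varepsilon$ gives $d_n > x - \varepsilon$, a contradiction. Therefore $d_n \geq x$, and since $n \geq n_0$ was arbitrary this completes the proof.

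There is essentially no hard step here; the one thing to be careful about is that the gap $\varepsilon$ is genuinely bounded away from $0$, which is exactly where the finiteness of $S$ (equivalently of $S'$) is used — an infimum over an infinite set of reals below $x$ could equal $x$, so finiteness cannot be dropped.
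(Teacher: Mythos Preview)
Your proof is correct and follows essentially the same approach as the paper: both define the maximum element of $S$ lying strictly below $x$, use convergence to get $x_n$ (and hence $d_n$) above that threshold, and conclude $d_n \geq x$ from $d_n \in S$. Your version is in fact slightly more careful, since you explicitly treat the case where no element of $S$ lies below $x$, whereas the paper tacitly assumes $\{s\in S:s<x\}$ is nonempty when taking its maximum.
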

\begin{proof}
Let $\bar{s}=\max\{s \in S:s<x\}$. As $\lim_{n \rightarrow \infty}x_n=x$, there is some $n_0 \in \mathbb{N}$ such that $x_n > \bar{s}$ for all $n \geq n_0$. For all $n \geq n_0$, we obtain $d_n\geq x_n > \bar{s}$. By the choice of $\bar{s}$ and $d_n \in S$ for all $n \in \mathbb{N}$, we obtain $d_n \geq x$ for all $n \geq n_0$.
\end{proof}
We finally need the following very basic observation that can easily be verified.
\begin{Proposition}\label{loga}
Let $\mu$ be a positive integer. Then $\frac{11 \log (\mu)+\frac{5}{2}}{2 \mu +1}\leq \frac{7}{2}$.
\end{Proposition}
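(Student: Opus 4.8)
The plan is to clear the positive denominator, reduce to a clean linear inequality in $\mu$ and $\log\mu$, verify the few smallest values of $\mu$ by direct arithmetic, and handle all larger values via a textbook bound on $\log\mu$.

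First I would rewrite the claim: since $2\mu+1>0$, the asserted inequality $\frac{11\log(\mu)+\frac{5}{2}}{2\mu+1}\leq\frac{7}{2}$ is equivalent to $11\log(\mu)+\frac{5}{2}\leq\frac{7}{2}(2\mu+1)=7\mu+\frac{7}{2}$, that is, to $11\log(\mu)\leq 7\mu+1$, and it is this last inequality that I would prove for every positive integer $\mu$.

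For $\mu\in\{1,2,3\}$ I would check it directly: for $\mu=1$ the left-hand side is $0\leq 8$; for $\mu=2$ it is $11\leq 15$; and for $\mu=3$ we have $\log(3)<\frac{8}{5}$ since $3^5=243<256=2^8$, hence $11\log(3)<\frac{88}{5}<22=7\cdot 3+1$. For $\mu\geq 4$ I would use the well-known fact that $\mu^2\leq 2^\mu$ (an immediate induction on $\mu$ with base case $16=4^2\leq 2^4$); taking logarithms to base $2$ yields $2\log(\mu)\leq\mu$, whence $11\log(\mu)\leq\frac{11\mu}{2}\leq 7\mu\leq 7\mu+1$. Together the two ranges give $11\log(\mu)\leq 7\mu+1$ for every positive integer $\mu$, which is exactly the proposition.

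There is no real obstacle in this argument: it consists of one algebraic reduction, three arithmetic checks, and one standard lemma. The only point requiring a little care is to make the estimate for $\log 3$ rigorous, which is why I would invoke the exact inequality $3^5<2^8$ rather than a decimal approximation of $\log 3$.
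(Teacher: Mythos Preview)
Your argument is correct: clearing the positive denominator reduces the claim to $11\log(\mu)\leq 7\mu+1$, the three small cases are valid (in particular $3^5<2^8$ rigorously gives $\log 3<\tfrac{8}{5}$), and for $\mu\geq 4$ the standard inequality $\mu^2\leq 2^{\mu}$ (whose inductive step only needs $(\mu+1)^2\leq 2\mu^2$, i.e.\ $\mu^2-2\mu-1\geq 0$, which holds for $\mu\geq 3$) yields $11\log(\mu)\leq\tfrac{11}{2}\mu\leq 7\mu+1$. The paper does not supply a proof of this proposition at all, merely stating that it ``can easily be verified'', so your write-up simply fills in that omitted routine verification.
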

\section{Balancing two spanning trees}\label{alt}

This section is dedicated to proving Theorem \ref{main}.
\medskip

 For the sake of a contradiction, we suppose that Theorem \ref{main} is wrong and choose a counterexample $G$ such that $|V(G)|$ is minimum. Throughout the proof, we call a vertex $v \in V(G)$ with $d_G(v)=i$ for some integer $i$ an {\it $i$-vertex}. A vertex $v \in V(G)$ is called {\it big} if $d_G(v)\geq 8$ and {\it small} otherwise. For some integers $i,j$, an edge $e \in E(G)$  is called an {\it $(i,j)$-edge} if it links an $i$-vertex and a $j$-vertex and an {\it $(i,big)$-edge} if it links an $i$-vertex and a big vertex.
\medskip

 We first give some structural properties of $G$ in Section \ref{struc}. After, we obtain a contradiction using a discharging procedure in Section \ref{charge}.

\subsection{Structural properties of a minimum counterexample}\label{struc}

In this section, we collect some structural properties of $G$. We divide this into three parts. In Section \ref{base}, we give a collection of properties of $G$ which are similar to the ones found by Kriesell in \cite{nk1}. While many of the ideas can be found in \cite{nk1}, we alter the results in order to make them helpful for proving the more general statement of Theorem \ref{main}. After simple adapations, the results in Section \ref{base} would already suffice to prove a version of Theorem \ref{main} in which the constant $5$ is replaced by $6$. This could be done using a discharging procedure which is similar but simpler than the one used in Section \ref{charge}.

Some more care is needed for the improvement of this constant. The main new result which is used in the discharging procedure is Lemma \ref{new}. In Section \ref{tech1}, we give two slightly technical preparatory results that show that some degenerate cases which could cause problems for the proof of Lemma \ref{new} do not actually occur. After, in Section \ref{hstrrh}, we give the proof of Lemma \ref{new}.
\subsubsection{Basic structural properties}\label{base}
We start by dealing with 2-vertices. We first exclude one degenerate case.

\begin{Proposition}\label{dz}
No 2-vertex is incident to two parallel edges.
\end{Proposition}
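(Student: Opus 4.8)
The statement to prove is Proposition \ref{dz}: no 2-vertex is incident to two parallel edges, where $G$ is a minimum counterexample to Theorem \ref{main}.

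\medskip

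\textbf{Approach.} The plan is a standard minimality argument: suppose some 2-vertex $v$ is incident to two parallel edges $e_1, e_2$ linking $v$ and some vertex $u$. Since $(T_1,T_2)$ is a \stf of $G$ (which exists because $G$ is a double tree), each of $e_1,e_2$ lies in exactly one of $T_1,T_2$; and since $v$ must have degree at least $1$ in each spanning tree and $d_G(v)=2$, exactly one of $e_1,e_2$ is in $T_1$ and the other in $T_2$, say $e_1 \in E(T_1)$ and $e_2 \in E(T_2)$. The idea is to contract the vertex $v$ — more precisely, form $G'$ from $G$ by deleting $v$ (and the edges $e_1,e_2$) and, if $u$ had other neighbours, leave $u$ in place; since $|V(G')| < |V(G)|$, I would like to invoke minimality of $G$ on $G'$.

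\medskip

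\textbf{Key steps.} First I would check that $G'$ is again a double tree: the pair $(T_1 - e_1, T_2 - e_2)$, restricted to $V(G) - v = V(G')$, is a \stf of $G'$, because removing a leaf (here $v$, a leaf of both $T_1$ and $T_2$) from a spanning tree leaves a spanning tree of the smaller graph, and the two edge sets remain disjoint and still cover $E(G')=E(G)-e_1-e_2$. Next, by minimality of $G$, there is a \stf $(T_1',T_2')$ of $G'$ with $|d_{T_1'}(w)-d_{T_2'}(w)|\le 5$ for all $w\in V(G')$. Then I would lift this back to $G$ by setting $T_1 := T_1' + e_1$ and $T_2 := T_2' + e_2$; these are edge-disjoint spanning trees of $G$ covering $E(G)$, hence a \stf of $G$. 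Finally I must verify the degree condition in $G$: for every $w \neq v, u$ the degrees are unchanged, so the bound holds; at $v$ we have $d_{T_1}(v)=d_{T_2}(v)=1$, so the bound holds; and at $u$ each of $d_{T_1}(u), d_{T_2}(u)$ increases by exactly $1$ relative to $G'$, so $|d_{T_1}(u)-d_{T_2}(u)| = |d_{T_1'}(u)-d_{T_2'}(u)| \le 5$. This exhibits a valid \stf of $G$, contradicting that $G$ is a counterexample.

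\medskip

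\textbf{Main obstacle.} The argument is essentially routine; the only point requiring a little care is ensuring that $G'$ is a well-defined graph on which minimality applies — in particular that $V(G')$ is nonempty and that deleting $v$ does not accidentally disconnect something. Since $v$ is a leaf of each $T_i$, the trees $T_i - v$ are genuinely spanning trees of $G - v$, so $G - v$ is connected (in fact a double tree) and has strictly fewer vertices; the edge count $|E(G')| = 2|V(G')| - 2$ is automatically consistent via Proposition \ref{aretes}. One should also note that $u \neq v$ (a 2-vertex incident to two parallel edges has its two edges going to a genuinely distinct vertex, since loops are not considered here), so the lift is unambiguous. With these trivial checks in place the proof is complete.
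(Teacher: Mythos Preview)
Your proof is correct and follows essentially the same approach as the paper: delete the 2-vertex, observe the resulting graph is a smaller double tree, apply minimality, and lift the balanced factorization back by reattaching the two parallel edges, one to each tree. The only cosmetic difference is that the paper handles the degree condition at the deleted vertex by invoking Proposition~\ref{trivial} (since a 2-vertex is small), whereas you verify $d_{T_1}(v)=d_{T_2}(v)=1$ directly.
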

\begin{proof}
Suppose for the sake of a contradiction that there is a 2-vertex $x$ which is linked by two edges $e,f$ to another vertex $y$ in $G$. Let $(T_1,T_2)$ be a \stf of $G$. Clearly, one of $e$ and $f$ is contained in $E(T_1)$, while the other one is contained in $E(T_2)$. Let $G'=G-x, T_1'=T_1-x$  and $T_2'=T_2-x$. We obtain that $(T_1',T_2')$ is a \stf of $G'$, so $G'$ is a double tree. As $G'$ is smaller than $G$, we obtain that $G'$ has a \stf $(S_1',S_2')$ such that $|d_{S'_1}(v)-d_{S'_2}(v)|\leq 5$ for all $v \in V(G')$. Now let $S_1=S_1'+e$ and $S_2=S_2'+f$.

 Clearly, $(S_1,S_2)$ is a \stf of $G$. We have $|d_{S_1}(v)-d_{S_2}(v)|=|d_{S'_1}(v)-d_{S'_2}(v)|\leq 5$ for all $v \in V(G)-\{x,y\}$. Next, we have $|d_{S_1}(y)-d_{S_2}(y)|=|(d_{S'_1}(y)+1)-(d_{S'_2}(y)+1)|=|d_{S'_1}(y)-d_{S'_2}(y)|\leq 5$. As $x$ is small, by Proposition \ref{trivial}, we obtain a contradiction to $G$ being a counterexample.
\end{proof}

 The following result shows that no 2-vertex can be linked to another small vertex in $G$.
\begin{Lemma}\label{2gross}
Every 2-vertex is incident to two $(2,big)$-edges.
\end{Lemma}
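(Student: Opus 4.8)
The plan is to argue by contradiction using the minimality of $G$, in the same spirit as the proof of Proposition \ref{dz}. Suppose some 2-vertex $x$ is incident to an edge $e=xy$ where $y$ is small, i.e.\ $d_G(y)\le 7$; by Proposition \ref{dz} the two edges at $x$ are not parallel, so the other edge at $x$ is $f=xz$ with $z\ne y$. Fix a \stf $(T_1,T_2)$ of $G$; one of $e,f$ lies in $T_1$ and the other in $T_2$, say $e\in E(T_1)$ and $f\in E(T_2)$. The natural reduction is to delete $x$ and reconnect: set $G'=G-x+g$ where $g=yz$ is a new edge linking $y$ and $z$ (if $yz$ is already an edge of $G$ we simply add a parallel copy). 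Then $(T_1-x+g,\,T_2-x)$, or with $g$ placed in the appropriate tree, is a \stf of $G'$, so $G'$ is a double tree with $|V(G')|<|V(G)|$, and by minimality $G'$ has a \stf $(S_1',S_2')$ with $|d_{S_1'}(v)-d_{S_2'}(v)|\le 5$ for all $v\in V(G')$.

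Now I would lift $(S_1',S_2')$ back to a \stf of $G$ by replacing the edge $g$ (which lies in exactly one of $S_1',S_2'$, say $S_1'$) with the path $x$ through $e$ and $f$: put $e$ into $S_1$ and $f$ into $S_2$ (or vice versa), obtaining spanning trees $S_1=S_1'-g+e$ and $S_2=S_2'+f$. One checks $(S_1,S_2)$ is a \stf of $G$. For $v\notin\{x,y,z\}$ the degrees are unchanged, so the bound holds. At $z$: the edge $g$ contributed $1$ to $z$ in $S_1'$ and now $f$ contributes $1$ to $z$ in $S_2$ instead, so $d_{S_1}(z)=d_{S_1'}(z)-1$ and $d_{S_2}(z)=d_{S_2'}(z)+1$, a net swing of $2$ in the difference at $z$; this is the delicate point. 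At $y$: $g$ contributed $1$ to $y$ in $S_1'$ and now $e$ contributes $1$ to $y$ in $S_1$, so the degrees at $y$ are unchanged. At $x$ itself, $d_{S_1}(x)=d_{S_2}(x)=1$, which is fine. So the only possible failure is at $z$, where the difference could jump from $5$ to $7$.

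To handle $z$, the point is that we have a free choice of which of $e,f$ goes into which tree. The reconnecting edge $g$ lies in, say, $S_1'$; if $d_{S_1'}(z)-d_{S_2'}(z)\ge 0$ we instead push $f$ into $S_2$ (decreasing the $S_1'$-side count at $z$ and increasing the $S_2'$-side count), which moves the difference toward $0$ by $2$ — this keeps $|d_{S_1}(z)-d_{S_2}(z)|\le 5$ unless the original difference was exactly $0$, in which case it becomes $2$, still fine. The genuinely problematic configuration is when $g\in E(S_1')$ forces the new $f$-edge onto the side already in the lead. To get around this I would exploit the tree-mapping functions of Proposition \ref{map}: if placing $f$ into the tree not containing $g$ would create a cycle, we first apply a tree-mapping swap at $z$ inside $(S_1',S_2')$ to rebalance, noting by Proposition \ref{trivial2}-type reasoning that the swapped edge stays incident to $z$ when $z$ has low degree in one tree. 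Alternatively — and this is probably the cleaner route — one should use that, since $y$ is small ($d_G(y)\le 7$), deleting $x$ and instead \emph{contracting} $e$ (identifying $x$ with $y$) gives a smaller double tree; lifting back only redistributes one edge at the single merged vertex, and the resulting degree difference there stays within $5$ because $y$ had degree at most $7$ to begin with and Proposition \ref{trivial} applies.

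The main obstacle is exactly this control at the attachment vertex $z$ (or the merged vertex): the reduction inevitably moves one unit of degree from one tree to the other there, and one must ensure this never pushes an already-tight difference of $5$ up to $7$. The resolution is to combine the freedom in orienting the pair $\{e,f\}$ with a single tree-mapping exchange to preserve the invariant, and to observe that the case $d_G(y)\le 7$ gives enough slack (via Proposition \ref{trivial}) that any vertex whose difference we cannot directly control is automatically small and hence harmless. Once this is arranged, we contradict the choice of $G$ as a counterexample, so every 2-vertex must in fact be incident to two $(2,\mathrm{big})$-edges.
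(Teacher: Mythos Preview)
Your reduction is broken at the very first step. You set $G'=G-x+g$ with $g=yz$ a new edge, and claim that $(T_1-x+g,\,T_2-x)$ is a spanning tree factorization of $G'$. But since $x$ is a $2$-vertex it is a leaf in both $T_1$ and $T_2$, so $T_1-x$ is already a spanning tree of $V(G)-x=V(G')$; adding $g$ to it produces a cycle. Equivalently, count edges: $|E(G')|=|E(G)|-2+1=2|V(G)|-3=2|V(G')|-1$, which is one too many for a double tree. Your alternative ``contract $e$'' suggestion has the same defect: contracting one edge drops the vertex count by one and the edge count by one, again leaving $2|V(G')|-1$ edges. Consequently all of the subsequent lifting analysis, the ``swing of $2$'' worry at $z$, and the proposed tree-mapping repairs are addressing a situation that never legitimately arises.

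The paper's argument avoids all of this by taking the simpler reduction $G'=G-x$ with \emph{no} added edge. Because $x$ is a leaf in both trees, $(T_1-x,\,T_2-x)$ is a spanning tree factorization of $G'$, so $G'$ is a smaller double tree and minimality gives a balanced factorization $(S_1',S_2')$. Lifting back is then completely free: one attaches $x$ as a leaf to each $S_i'$, and may send $xy$ to either tree and $xz$ to the other, independently of anything in $(S_1',S_2')$. Choosing (by symmetry) $d_{S_1'}(z)\ge d_{S_2'}(z)$ and putting $xz$ into $S_2$ makes the difference at $z$ change by at most $1$ in the favourable direction. The vertex $y$ need not be controlled at all, since it is small by hypothesis and Proposition~\ref{trivial} handles it; likewise for $x$. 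So the ``delicate point'' you identify is an artefact of the wrong reduction, not a genuine obstacle.
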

\begin{proof}
Let $x$ be a 2-vertex and let $xy,xz$ be the two edges incident to $x$. By Proposition \ref{dz}, we have $y \neq z$. Suppose for the sake of a contradiction that one of $y$ and $z$, say $y$, is small. Let $(T_1,T_2)$ be a \stf of $G$. As $T_1$ and $T_2$ are spanning trees, each of them has to contain exactly one edge incident to $x$. Let $G'=G-x, T_1'=T_1-x$  and $T_2'=T_2-x$. We obtain that $(T_1',T_2')$ is a \stf of $G'$, so $G'$ is a double tree. As $G'$ is smaller than $G$, we obtain that $G'$ has a \stf $(S_1',S_2')$ such that $|d_{S'_1}(v)-d_{S'_2}(v)|\leq 5$ for all $v \in V(G')$. By symmetry, we may suppose that $d_{S'_1}(z) \geq d_{S'_2}(z)$. Let $S_1=S'_1+xy$ and $S_2=S_2'+xz$.

 Clearly, $(S_1,S_2)$ is a \stf of $G$. We have $|d_{S_1}(v)-d_{S_2}(v)|=|d_{S'_1}(v)-d_{S'_2}(v)|\leq 5$ for all $v \in V(G)-\{x,y,z\}$. If $d_{S'_1}(z) = d_{S'_2}(z)$, we obtain $|d_{S_1}(z)-d_{S_2}(z)|=|d_{S'_1}(z)-(d_{S'_2}(z)+1)|=1 < 5$. Otherwise, as $d_{S'_1}(z) > d_{S'_2}(z)$, we obtain $|d_{S_1}(z)-d_{S_2}(z)|=|d_{S'_1}(z)-d_{S'_2}(z)|-1\leq 4 < 5$.  As $x$ and $y$ are small, by Proposition \ref{trivial}, we obtain a contradiction to $G$ being a counterexample.
\end{proof}
We next wish to show that no big vertex can be linked to several 2-vertices. %Using preparatory observation in Proposition \ref{dz}, this is proven in Lemma \ref{only}.

\begin{Lemma}\label{only}
Every big vertex is incident to at most one $(2,big)$-edge.
\end{Lemma}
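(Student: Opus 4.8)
The plan is to argue by contradiction using the minimality of $G$, in the same spirit as the proofs of Proposition \ref{dz} and Lemma \ref{2gross}. Suppose some big vertex $w$ is incident to two distinct $(2,\textit{big})$-edges, say $wx$ and $wx'$, where $x$ and $x'$ are $2$-vertices. By Lemma \ref{2gross}, the other neighbour of $x$ is a big vertex $y$ and the other neighbour of $x'$ is a big vertex $y'$; by Proposition \ref{dz} we have $y\neq w$ and $y'\neq w$. Let $(T_1,T_2)$ be a \stf of $G$; each $T_i$ contains exactly one of $wx,xy$ and exactly one of $wx',x'y'$. First I would contract or delete the two $2$-vertices to obtain a smaller graph: let $G'$ be obtained from $G$ by deleting $x$ and $x'$ and adding the two edges $wy$ and $wy'$ (these may be parallel to existing edges, which is harmless). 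Deleting $x$ from $T_1,T_2$ and re-routing through the new edges shows $(T_1',T_2')$ is a \stf of $G'$, so $G'$ is a double tree with fewer vertices, hence it has a \stf $(S_1',S_2')$ with $|d_{S_1'}(v)-d_{S_2'}(v)|\le 5$ for all $v\in V(G')$.

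Next I would lift the solution back to $G$. The edge $wy$ of $G'$ lies in exactly one of $S_1',S_2'$; whichever one, say $S_1'$, I replace $wy$ by the path $w\,x\,y$, i.e. put $wx\in S_1$ and $xy\in S_1$, and symmetrically route $wx'y'$ through whichever of $S_1',S_2'$ contained $wy'$. This produces a \stf $(S_1,S_2)$ of $G$. The point of using Lemma \ref{2gross} was to guarantee $y,y'$ are big, so no constraint at a small vertex is threatened by the extra incidence there; and since $x,x'$ are small, Proposition \ref{trivial} means we only need to verify the degree-balance condition at big vertices. At $y$ and $y'$ the degrees in $S_1,S_2$ change by at most one relative to $G'$ (one incident edge $wy$ resp. $wy'$ is replaced by one incident edge $xy$ resp. $x'y'$ in the same tree), so the balance at $y,y'$ is unchanged. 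The only genuinely delicate vertex is $w$: it gained potentially $0$, $1$, or $2$ units of degree in a single tree depending on how $wy,wy'$ were distributed between $S_1',S_2'$. If they went to different trees, the degrees of $w$ in $S_1,S_2$ both go up by $1$ and balance is preserved. If both went to the same tree, say $S_1'$, then $d_{S_1}(w)=d_{S_1'}(w)+2$ while $d_{S_2}(w)$ is unchanged, so $|d_{S_1}(w)-d_{S_2}(w)|$ could increase by $2$, which is not good enough.

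The main obstacle is therefore exactly this last case, and the plan is to avoid it by choosing the \stf of $G'$ more cleverly rather than taking an arbitrary one. Using the tree-mapping function from Proposition \ref{map} together with Propositions \ref{trivial2} and \ref{triangle}, I would argue that if $wy$ and $wy'$ lie in the same tree, say $S_1'$, and if moving one of them to $S_2'$ is obstructed, then in fact the balance at $w$ in $G'$ must already have slack: concretely, if $|d_{S_1'}(w)-d_{S_2'}(w)|\le 3$ we are done immediately, so we may assume $d_{S_1'}(w)-d_{S_2'}(w)\in\{4,5\}$ and $w$ is big, and I would use an exchange along the fundamental cycle of one of $wy,wy'$ in $S_2'$ to swap an edge at $w$ from $S_1'$ to $S_2'$, decreasing the imbalance at $w$ in $G'$ by $2$ while changing no degree by more than $1$ anywhere else — and in particular keeping every vertex within the bound $5$ — before performing the lift. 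The technical content is checking that such an exchange edge at $w$ can always be found, i.e. that $w$ is not a leaf of $S_1'$ (it is not, since $d_{S_1'}(w)-d_{S_2'}(w)\ge 4$ forces $d_{S_1'}(w)\ge 5$) and that the swapped edge can be taken incident to $w$; this is where a short case analysis, analogous to the one in Lemma \ref{2gross}, finishes the argument and yields the contradiction to $G$ being a counterexample.
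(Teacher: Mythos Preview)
Your reduction does not produce a double tree. Deleting $x$ and $x'$ removes two vertices and four edges; adding $wy$ and $wy'$ restores two edges, so $|E(G')|=|E(G)|-2=2|V(G)|-4=2|V(G')|$, which is two more than a double tree allows. Concretely, in any \stf $(T_1,T_2)$ of $G$ the $2$-vertex $x$ has exactly one incident edge in each $T_i$, so neither tree contains the full path $wxy$ and there is no way to ``re-route'' through the single edge $wy$ to obtain a \stf of $G'$. The same obstruction breaks your lift: putting both $wx$ and $xy$ into $S_1$ leaves $x$ with degree $0$ in $S_2$, so $(S_1,S_2)$ is not a \stf of $G$ at all. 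Your exchange argument at $w$ is therefore built on a construction that never gets off the ground (and, incidentally, if the suppression were legitimate then $d_{S_i}(w)=d_{S_i'}(w)$ in every case, since the edge $wy$ is traded for $wx$; the ``$0,1,2$'' count is also off).

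The paper's reduction fixes both issues at once. In your notation: delete $x$ and $x'$ and add a \emph{single} new vertex $u$ joined to $y$ and $y'$. Then $|V(G')|=|V(G)|-1$ and $|E(G')|=|E(G)|-2=2|V(G')|-2$, and one checks directly that $G'$ is a double tree. The crucial point is that $u$ is itself a $2$-vertex of $G'$, so in any balanced \stf $(S_1',S_2')$ of $G'$ its two edges lie in different trees, say $uy\in E(S_2')$ and $uy'\in E(S_1')$. Lifting via $S_1=S_1'-u+\{wx,x'y'\}$ and $S_2=S_2'-u+\{wx',xy\}$ gives $w$ exactly one new incident edge in each tree, and replaces $uy'\in S_1'$ by $x'y'\in S_1$ (and similarly at $y$), so the balance at $w$, $y$ and $y'$ is preserved exactly and no exchange step is needed.
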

\begin{proof}
Suppose for the sake of a contradiction that there is a big vertex $x$ that is incident to two $(2,big)$-edges $xy_1$ and $xy_2$. By Proposition \ref{dz}, we have $y_1 \neq y_2$. Let $z_1,z_2$  be the neighbors of $y_1$ and $y_2$ different from $x$, respectively. By Lemma \ref{2gross}, we have $z_1 \neq y_2$ and $z_2 \neq y_1$. Note that possibly $z_1=z_2$. We now create the graph $G'$ from $G$ by deleting $y_1$ and $y_2$, adding a new vertex $y$ and adding the two edges $z_1y$ and $z_2y$. 
\begin{Claim}\label{2t}
$G'$ is a double tree.
\end{Claim}
\begin{proof}[Proof of Claim]
Let $(T_1,T_2)$ be a \stf of $G$. As $y_1$ and $y_2$ are 2-vertices, we obtain that $(T_1-\{y_1,y_2\}, T_2-\{y_1,y_2\})$ is a \stf of $G-\{y_1,y_2\}$. It follows that $(T_1-\{y_1,y_2\}+z_1y, T_2-\{y_1,y_2\}+z_2y)$ is a \stf of $G'$, so $G'$ is a double tree.
\renewcommand{\qedsymbol}{$\blacksquare$}
\end{proof}
By Claim \ref{2t} and as $G'$ is smaller than $G$,  we obtain that $G'$ has a \stf $(S_1',S_2')$ such that $|d_{S'_1}(v)-d_{S'_2}(v)|\leq 5$ for all $v \in V(G')$. By symmetry and as $S_1'$ and $S_2'$ are spanning trees of $G'$, we may suppose that $z_1y \in E(S'_2)$ and $z_2y \in E(S'_1)$. Now let $S_1=S_1'-y+\{xy_1,y_2z_2\}$ and $S_2=S_2'-y+\{xy_2,y_1z_1\}$.

 Clearly, $(S_1,S_2)$ is a \stf of $G$. We have $|d_{S_1}(v)-d_{S_2}(v)|=|d_{S'_1}(v)-d_{S'_2}(v)|\leq 5$ for all $v \in V(G)-\{x,y_1,y_2\}$. Next, we have $|d_{S_1}(x)-d_{S_2}(x)|=|(d_{S'_1}(x)+1)-(d_{S'_2}(x)+1)|=|d_{S'_1}(x)-d_{S'_2}(x)|\leq 5$. As $y_1$ and $y_2$ are small, by Proposition \ref{trivial}, we obtain a contradiction to $G$ being a counterexample.
\end{proof}

We now deal with 3-vertices. Observe that given a 3-vertex $x$ and a \stf $(T_1,T_2)$ of $G$, we have that $x$ is of degree $1$ in $T_i$ and of degree $2$ in $T_{3-i}$ for some $i \in \{1,2\}$. We then call the unique edge in $\delta_G(x)\cap E(T_i)$ the {\it special} edge of $x$ with respect to $(T_1,T_2)$.

In Proposition \ref{cfzz} we show that every 3-vertex is linked to a big vertex by its special edge. Proposition \ref{cfzz} will also be applied in Section \ref{hstrrh}. After, in Lemma \ref{aumoins23}, we use this to conclude that every 3-vertex is incident to at least two $(3,big)$-edges. 
\begin{Proposition}\label{cfzz}
Let $(T_1,T_2)$ be a \stf of $G$ and let $x,y \in V(G)$ such that $x$ is a 3-vertex and $xy$ is the special edge of $x$ with respect to $(T_1,T_2)$. Then $y$ is big.
\end{Proposition}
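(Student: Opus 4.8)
Suppose toward a contradiction that $x$ is a $3$-vertex whose special edge $xy$ has $y$ small, so $x$ has degree $1$ in one of the trees, say $T_1$, and degree $2$ in $T_2$, with $xy \in E(T_1)$. Let the other two edges incident to $x$ be $xa, xb$, which lie in $T_2$. The strategy is the standard one used throughout Section \ref{base}: delete $x$, possibly contract or reroute a couple of edges to obtain a smaller double tree $G'$, invoke minimality of $G$ to get a balanced \stf of $G'$, and then reinsert $x$ (and undo the rerouting) so that the degree differences at the affected vertices stay within $5$. The vertices whose degrees change are $x$ itself (small, hence harmless by Proposition \ref{trivial}), $y$ (small, hence harmless), and whatever vertices are touched by the rerouting.

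First I would note that $x$ cannot be incident to two parallel edges toward a small vertex by arguments like Proposition \ref{dz}, and that $a, b$ are distinct from $x$; some attention is needed to handle the case $y \in \{a,b\}$ (parallel edges at $x$) and the case $a = b$, but these are degenerate and can be dispatched by the same deletion move or excluded outright. In the main case, where $x, y, a, b$ are distinct, I would form $G'$ by deleting $x$ and adding a single new edge $ab$ (merging the two ``$T_2$-sides'' of $x$ into one edge) while the edge $xy$ of $T_1$ simply disappears. A quick check shows $(T_1 - x,\, T_2 - x + ab)$ — after observing $T_2 - x$ falls into two components, one containing $a$ and one containing $b$ — is a \stf of $G'$, so $G'$ is a double tree on fewer vertices. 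By minimality, $G'$ has a balanced \stf $(S_1', S_2')$; the new edge $ab$ lies in exactly one of them, say $S_1'$ (it cannot lie in both). Now set $S_1 = S_1' - ab + \{xa, xb\}$ — wait, this makes a cycle; instead the correct reinsertion keeps one of $xa,xb$ in each relevant place. Concretely I would put $S_1 = (S_1' - ab) + xa + xy'$? The bookkeeping must be arranged so that $S_1$ is a spanning tree and $S_2$ is a spanning tree: one clean choice is $S_1 = S_1' - ab + xa + xb$ only if that breaks the cycle through the vertex $y$-component — so in fact the right move reinserts $xy$ into one tree and $\{xa,xb\}$ split appropriately, mirroring the proof of Lemma \ref{only}.

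The degree accounting then runs exactly as in the earlier proofs: all vertices outside $\{x, y, a, b\}$ keep their $S'$-degrees; at $a$ and $b$ the change from the edge $ab$ being replaced by one edge to $x$ is a net zero shift in the difference $d_{S_1} - d_{S_2}$ (we remove one edge from one tree and add one edge to the same tree at that endpoint), so their differences remain $\le 5$; and $x, y$ are small, so Proposition \ref{trivial} finishes the job, contradicting that $G$ is a counterexample.

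**Main obstacle.** The only genuinely delicate point is choosing the reinsertion of $x$ so that both $S_1$ and $S_2$ come out as spanning trees while simultaneously controlling the degree difference at $a$ and $b$ — this is where the ``which tree does $ab$ belong to'' case distinction and a possible appeal to symmetry or to a tree-mapping function (Proposition \ref{map}) enters. A secondary nuisance is the degenerate configurations ($a=b$, or $y\in\{a,b\}$, or edges between $a$ and $b$ already present), each of which needs its own one-line deletion argument; none is hard, but they must be listed. I do not expect the degree bookkeeping itself to be the bottleneck, since the relevant vertices are all either small or see a net-zero change in their degree difference.
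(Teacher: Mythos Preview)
Your approach is exactly the one the paper uses, but you talk yourself out of the correct reinsertion step. When $ab\in E(S_1')$ (your notation; the paper writes $z_1z_2\in E(S_2')$), the assignment
\[
S_1 \;=\; S_1' - ab + \{xa, xb\}, \qquad S_2 \;=\; S_2' + xy
\]
does \emph{not} create a cycle: $x$ is a new vertex not present in $G'$, so $S_1'-ab$ breaks into two components (one containing $a$, one containing $b$) and the path $a\,x\,b$ reconnects them into a tree on $V(G)$; meanwhile $S_2'+xy$ simply attaches $x$ as a leaf. This is precisely the paper's move. The degree bookkeeping is then immediate: for $v\notin\{x,y\}$ we have $d_{S_i}(v)=d_{S_i'}(v)$ (at $a$ and $b$ one edge is removed and one added in the \emph{same} tree), and $x,y$ are both small so Proposition~\ref{trivial} handles them.

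Your worries about degenerate configurations are also unnecessary. The case $a=b$ is impossible because $xa,xb\in E(T_2)$ and $T_2$ is a tree, hence has no parallel edges. The cases $y\in\{a,b\}$ and ``$ab$ already an edge of $G$'' cause no trouble: $G'$ is allowed to have parallel edges, the verification that $(T_1-x,\,T_2-x+ab)$ is a spanning tree factorization of $G'$ is unaffected, and the reinsertion above still produces two spanning trees of $G$ with the same degree accounting. No separate case analysis is needed, and no appeal to tree-mapping functions or to Lemma~\ref{only} is required.
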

\begin{proof}
Let $x$ be incident to the three edges $xy,xz_1$ and $xz_2$. By assumption and symmetry, we may suppose that $xy \in E(T_1)$ and $xz_1,xz_2 \in E(T_2)$. Now let $G'=G-x+z_1z_2, T_1'=T_1-x$ and $T_2'=T_2-x+z_1z_2$.  We obtain that $(T_1',T_2')$ is a \stf of $G'$, so $G'$ is a double tree. As $G'$ is smaller than $G$, we obtain that $G'$ has a \stf $(S_1',S_2')$ such that $|d_{S'_1}(v)-d_{S'_2}(v)|\leq 5$ for all $v \in V(G')$. By symmetry, we may suppose that $z_1z_2 \in E(S'_2)$. Now let $S_1=S_1'+xy$ and $S_2=S_2'-z_1z_2+\{xz_1,xz_2\}$.

 Clearly, $(S_1,S_2)$ is a \stf of $G$. We have $|d_{S_1}(v)-d_{S_2}(v)|=|d_{S'_1}(v)-d_{S'_2}(v)|\leq 5$ for all $v \in V(G)-\{x,y\}$. As $x$ is small, by Proposition \ref{trivial}, if $y$ is also small, we obtain a contradiction to $G$ being a counterexample. Hence $y$ is big.
\end{proof}
\begin{Lemma}\label{aumoins23}
Every 3-vertex is incident to at least two $(3,big)$-edges.
\end{Lemma}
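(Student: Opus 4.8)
The plan is to invoke Proposition \ref{cfzz} twice, once for an arbitrary \stf and once for a \stf obtained from it by a single exchange. Fix a 3-vertex $x$ and write $e_1=xa$, $e_2=xb$, $e_3=xc$ for the three edges incident to $x$ (we allow $a,b,c$ to coincide; this will not matter). Take any \stf $(T_1,T_2)$ of $G$. Since $d_G(x)=3$ and $x$ has degree at least $1$ in each of $T_1$ and $T_2$, the pair of degrees of $x$ in $(T_1,T_2)$ is $\{1,2\}$; so after relabelling the trees and the edges we may assume $e_1\in E(T_1)$ and $e_2,e_3\in E(T_2)$. Then $e_1$ is the special edge of $x$ with respect to $(T_1,T_2)$, so Proposition \ref{cfzz} gives that $a$ is big, i.e.\ $e_1$ is a $(3,big)$-edge. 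It remains to show that one of $e_2,e_3$ is a $(3,big)$-edge as well.

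For this I would take a tree-mapping function $\tau$ from $T_2$ to $T_1$, which exists by Proposition \ref{map} applied with the roles of $T_1$ and $T_2$ interchanged, and form the \stf $(S_1,S_2):=(T_2-e_2+\tau(e_2),\,T_1-\tau(e_2)+e_2)$. The key observation is that $e_1$ is the only edge of $T_1$ incident to $x$, so $\tau(e_2)$ is incident to $x$ if and only if $\tau(e_2)=e_1$. If $\tau(e_2)=e_1$, then in $(S_1,S_2)$ the vertex $x$ has degree $2$ in $S_1$ (through $e_1$ and $e_3$) and degree $1$ in $S_2$ (through $e_2$), so $e_2$ is the special edge of $x$ with respect to $(S_1,S_2)$. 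If $\tau(e_2)\neq e_1$, then $x$ has degree $1$ in $S_1$ (through $e_3$) and degree $2$ in $S_2$ (through $e_1$ and $e_2$), so $e_3$ is the special edge. In either case Proposition \ref{cfzz} applied to $(S_1,S_2)$ shows that the relevant endpoint ($b$ or $c$) is big, which produces the desired second $(3,big)$-edge and finishes the proof.

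I do not expect a genuine obstacle: the argument is short and rests entirely on Proposition \ref{cfzz} together with the existence of tree-mapping functions. The only points needing a little care are verifying that $x$ has degrees $\{1,2\}$ in the two trees of any \stf (so that ``the special edge'' is well defined) and correctly tracking, after the single exchange, which of $S_1,S_2$ contains only one edge at $x$ — precisely the case distinction on whether $\tau(e_2)=e_1$. The possibility that $x$ is incident to parallel edges causes no trouble, since nothing in the argument uses that $a,b,c$ are distinct.
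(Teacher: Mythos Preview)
Your proof is correct and follows essentially the same approach as the paper: apply Proposition~\ref{cfzz} to an initial spanning tree factorization and to one obtained from it by a single tree-mapping exchange. The only difference is that the paper performs the exchange from $T_1$ (the tree in which $x$ is a leaf) rather than from $T_2$; since $e_1$ is the unique $T_1$-edge at $x$, Proposition~\ref{trivial2} then forces the replacement edge $\sigma(e_1)$ to lie in $\delta_G(x)$, which immediately makes it the new special edge and thereby avoids your case distinction on whether $\tau(e_2)=e_1$.
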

\begin{proof}
Let $(T_1,T_2)$ be a \stf of $G$ and let $x$ be a 3-vertex. Further, let $e=xy$ be the special edge of $x$ with respect to $(T_1,T_2)$. By Proposition \ref{cfzz}, we obtain that $y$ is big. By symmetry, we may suppose that $e \in E(T_1)$.  Now consider a tree-mapping function $\sigma:E(T_1)\rightarrow E(T_2)$. By Proposition \ref{trivial2}, we have $\sigma(e)=f$ for some $f=xz \in \delta_G(x)$. By definition, $(T_1-e+f,T_2-f+e)$ is a \stf of $G$. Further observe that $f$ is the special edge of $x$ with respect to $(T_1-e+f,T_2-f+e)$. Now Proposition \ref{cfzz} yields that $z$ is big. It follows that both $e$ and $f$ are $(3,big)$-edges incident to $x$.
\end{proof}
\subsubsection{New preliminary results}\label{tech1}
In order to give more structural results, we need to make some finer distinctions between the 3-vertices. We call a 3-vertex {\it rich} if it is incident to three $(3,big)$-edges and {\it poor} if it is incident to exactly two $(3,big)$-edges. Observe that every 3-vertex is either rich or poor by Lemma \ref{aumoins23}. Further, we call an 8-vertex {\it critical} if it is incident to one $(2,8)$-edge and seven $(3,8)$-edges. We now give two slightly technical lemmas. Using recursive arguments, we show that certain degenerate cases cannot occur in the neighborhood of a critical 8-vertex. The first lemma shows that no critical 8-vertex can be linked to a poor 3-vertex by two parallel edges. 
\begin{Lemma}\label{drzrz}
Let $x$ be a critical 8-vertex and $y$ a poor 3-vertex. Then $G$ does not contain two parallel edges between $x$ and $y$.
\end{Lemma}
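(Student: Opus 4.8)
The plan is a reduction to a smaller double tree, exploiting the minimality of $G$ as a counterexample to Theorem \ref{main}. Suppose for contradiction that $e_1$ and $e_2$ are parallel edges between a critical $8$-vertex $x$ and a poor $3$-vertex $y$, and let $e_3=yz$ be the third edge at $y$. Since $x$ is big, $e_1$ and $e_2$ are both $(3,big)$-edges at $y$; as $y$ is poor it is incident to exactly two such edges, so $z$ is not big, that is, $z$ is small, and moreover $z\neq x$ (otherwise $e_3$ would be a third $(3,big)$-edge at $y$); one also checks $z\neq w$, where $w$ is the unique $2$-neighbour of $x$, using Lemma \ref{2gross}. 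I would then set $G'=G-y+e'$ with $e'=xz$ a new edge, and verify that $G'$ is a double tree exactly as in the proofs of Lemmas \ref{2gross}--\ref{aumoins23}: given a \stf $(T_1,T_2)$ of $G$, one of $e_1,e_2$ lies in each tree, $e_3$ lies in the tree in which $y$ has degree $2$ (Proposition \ref{cfzz} together with $z$ small rules out $e_3$ being the special edge of $y$), and replacing the $x$–$y$–$z$ path of that tree by $e'$ produces a \stf of $G'$. Since $|V(G')|<|V(G)|$, minimality yields a \stf $(S_1',S_2')$ of $G'$ with $|d_{S_1'}(v)-d_{S_2'}(v)|\le 5$ for all $v\in V(G')$.

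Next I would lift $(S_1',S_2')$ back to $G$. Assuming without loss of generality $e'\in E(S_1')$, put $S_1=S_1'-e'+\{e_1,e_3\}$ and $S_2=S_2'+e_2$. A routine check shows $(S_1,S_2)$ is a \stf of $G$, that the only vertices whose balance could change are $x$, $y$ and $z$, and that $y$ and $z$ are small, so by Proposition \ref{trivial} they need not be checked. At $x$ we obtain $(d_{S_1}(x),d_{S_2}(x))=(d_{S_1'}(x),d_{S_2'}(x)+1)$, a pair summing to $8$; since $d_{G'}(x)=7$ forces $1\le d_{S_1'}(x)\le 6$, the lift is balanced at $x$ unless $d_{S_1'}(x)=1$, that is, unless $e'$ is the unique edge of $S_1'$ at $x$. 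Hence it suffices to find a \stf $(S_1',S_2')$ of $G'$ with $|d_{S_1'}(v)-d_{S_2'}(v)|\le 5$ for all $v$ in which $x$ has degree at least $2$ in the tree containing $e'$.

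Ruling out the opposite is the heart of the argument, and is where the recursive reasoning enters. So suppose every balanced \stf of $G'$ has $e'$ as the unique edge at $x$ in its tree; equivalently, after lifting we always reach a \stf $(S_1,S_2)$ of $G$ balanced off $x$, with $x$ a leaf of $S_1$ via $e_1$ and $d_{S_2}(x)=7$. Here I would invoke the exchange property (Proposition \ref{map}) on $G$ and use the criticality of $x$: all its neighbours are small, one of them the $2$-vertex $w$ whose second edge $ww'$ leads to a big vertex $w'\neq x$ (Proposition \ref{dz}). For a tree-mapping $\sigma:E(S_2)\rightarrow E(S_1)$, Proposition \ref{trivial2} applied at $w$ gives $\sigma(xw)=ww'$, so $(S_2-xw+ww',\,S_1-ww'+xw)$ is a \stf of $G$ in which $x$ has degrees $6$ and $2$, hence is balanced at $x$; the only possible new imbalance is at $w'$, and only if $d_{S_2}(w')-d_{S_1}(w')$ was already $4$ or $5$. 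I would then treat $w'$ recursively: either it too admits a local exchange, or repeating the construction with one of the five $3$-vertex neighbours of $x$ in place of $w$ (all of whose edges at $x$ lie in $S_2$, and whose other endpoints are again controlled by Lemma \ref{aumoins23} and Propositions \ref{cfzz}, \ref{trivial2}) produces a balanced lift — the precise combinatorics of a critical $8$-vertex (one $2$-neighbour, seven $3$-neighbours, all small) being what keeps the recursion finite and makes the bookkeeping close.

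The main obstacle is exactly this last step: showing that among the exchanges available at $x$ at least one preserves balance at every big vertex, which requires the careful, case-driven analysis that the earlier lemmas do not. The remaining ingredients — that $G'$ is a double tree, that the lift is a \stf, and that every small or untouched vertex stays balanced — should be entirely routine and parallel to Lemmas \ref{2gross}, \ref{only} and \ref{aumoins23}.
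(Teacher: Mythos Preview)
Your reduction $G'=G-y+xz$ and the lift $S_1=S_1'-e'+\{e_1,e_3\}$, $S_2=S_2'+e_2$ are exactly what the paper does, and you correctly isolate the only obstruction: the case where $e'$ is the sole edge at $x$ in its tree, i.e.\ $d_{S_1'}(x)=1$. Up to that point the arguments coincide.

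The gap is in how you propose to handle that bad case. You lift first and then try to repair the imbalance at $x$ inside $G$ by swapping along $xw\leftrightarrow ww'$ (or along one of the $3$-neighbours); this drags in the big vertex $w'$, and you are left with an open-ended ``treat $w'$ recursively'' step that you yourself flag as the main obstacle and do not close. There is no reason this recursion terminates cleanly: each attempted swap at a neighbour of $x$ can touch a different big vertex, and nothing you have set up bounds the cascade.

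The paper avoids this entirely by performing the swap \emph{in $G'$, before lifting}. If $d_{S_1'}(x)=1$, take a tree-mapping $\sigma:E(S_1')\to E(S_2')$; Proposition~\ref{trivial2} gives $\sigma(e')\in\delta_{G'}(x)$, and setting $S_1''=S_2'-\sigma(e')+e'$, $S_2''=S_1'-e'+\sigma(e')$ produces a \stf of $G'$ with $e'\in E(S_1'')$ and $d_{S_1''}(x)=d_{S_2'}(x)=6\ge 2$. The only vertices whose balance can change are $x$ and its neighbours in $G'$; but every neighbour of $x$ in $G'$ is small (the original neighbours of $x$ are small because $x$ is critical, and $z$ is small because $y$ is poor), so Proposition~\ref{trivial} gives balance for free. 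Then lift $(S_1'',S_2'')$ exactly as you described. In short: swap first, then lift --- not the other way round.
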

\begin{proof}
Suppose otherwise, let $e,f$ be the two edges between $x$ and $y$ and let $yz$ be the unique edge incident to $y$ such that $z \neq x$. By symmetry, we may suppose that $yz \in E(T_1)$. Now let $G'=G-y+xz, T_1'=T_1-y+xz$ and $T_2'=T_2-y$.  We obtain that $(T_1',T_2')$ is a \stf of $G'$, so $G'$ is a double tree. As $G'$ is smaller than $G$, we obtain that $G'$ has a \stf $(S_1',S_2')$ such that $|d_{S'_1}(v)-d_{S'_2}(v)|\leq 5$ for all $v \in V(G')$. We now prove that we can impose a little extra property on this \stf of $G$.
\begin{Claim}\label{trdft}
There is a \stf $(S_1'',S_2'')$ of $G'$ such that $xz \in E(S''_1), d_{S''_1}(x)\geq 2$ and $|d_{S''_1}(v)-d_{S''_2}(v)|\leq 5$ for all $v \in V(G')$.
\end{Claim}
\begin{proof}[Proof of Claim]
 By symmetry, we may suppose that $xz \in E(S_1')$. If $d_{S_1'}(x)\geq 2$, there is nothing to prove. If $d_{S'_1}(x)=1$, let $\sigma:E(S'_1)\rightarrow E(S'_2)$ be a tree-mapping function. By Proposition \ref{trivial2}, we have $\sigma(xz) \in \delta_{G'}(x)$. Let $S_1''=S_2'-\sigma(xz)+xz$ and  $S_2''=S_1'-xz+\sigma(xz)$.

 By definition, $(S_1'',S_2'')$ is a \stf of $G'$. By construction, we have $xz \in E(S_1'')$. Next, we have  $d_{S_1''}(x)=d_{S_2'}(x)=d_{G'}(x)-d_{S_1'}(x)=d_{G}(x)-1-d_{S_1'}(x)=8-1-1=6\geq 2$. Further, we have $|d_{S''_1}(v)-d_{S''_2}(v)|=|d_{S'_1}(v)-d_{S'_2}(v)|\leq 5$ for all $v \in V(G')-(N_{G'}(x) \cup z)$. As $y$ is poor and there are two edges linking $x$ and $y$ in $G$, we obtain that $z$ is small. As $x$ is critical, all vertices in $N_G(x)$ are also small. Hence we are done by Proposition \ref{trivial}.
\renewcommand{\qedsymbol}{$\blacksquare$}
\end{proof}

Now let $S_1=S_1''-xz+\{e,yz\}$ and $S_2=S_2''+f$.

 Clearly, we have that $(S_1,S_2)$ is a \stf of $G$. We have $|d_{S_1}(v)-d_{S_2}(v)|=|d_{S''_1}(v)-d_{S''_2}(v)|\leq 5$ for all $v \in V(G)-\{x,y\}$. Observe that by Claim \ref{trdft}, we have $d_{S_1}(x)=d_{S_1''}(x)\geq 2$, so $d_{S_2}(x)= d_G(x)-d_{S_1}(x)\leq 8-2=6$ and by construction, $d_{S_2}(x)= d_{S''_2}(x)+1\geq 2$. This yields $|d_{S_1}(x)-d_{S_2}(x)|=|d_{G}(x)-2d_{S_2}(x)| \leq 4 < 5$. As $y$ is small, by Proposition \ref{trivial}, we obtain a contradiction to $G$ being a counterexample.
\end{proof}
The next lemma shows that the 3-vertices in the neighborhood of a critical 8-vertex cannot be linked by edges. 
\begin{Lemma}\label{notadj}
Let $x$ be a critical 8-vertex and $xy_1,xy_2$ be two $(3,8)$-edges incident to $x$ with $y_1 \neq y_2$. Then $E(G)$ does not contain an edge between $y_1$ and $y_2$. 
\end{Lemma}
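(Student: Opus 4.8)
The plan is to argue by contradiction in the same style as Lemma \ref{drzrz}: suppose $e=y_1y_2 \in E(G)$, fix a spanning tree factorization $(T_1,T_2)$ of $G$, and contract away the problematic local structure to obtain a smaller double tree $G'$ to which minimality applies. Since $x$ is critical, both $y_1$ and $y_2$ are $3$-vertices, and by Lemma \ref{aumoins23} each of them is incident to at least two $(3,\mathit{big})$-edges; in particular, since $x$ is an $8$-vertex (hence big) and the edges $xy_1,xy_2$ are present, each $y_i$ has exactly one further neighbor, say $z_i$, besides $x$ and besides $y_{3-i}$ (one should first dispose of the degenerate sub-case where $y_1,y_2$ have a common extra neighbor or where there are parallel edges among $x,y_1,y_2$, using Proposition \ref{dz} and, where needed, the fact that $y_1,y_2$ are poor/rich $3$-vertices; Lemma \ref{drzrz} already rules out parallel edges between $x$ and a poor $3$-vertex). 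The natural reduction is: delete $y_1$ and $y_2$ (and the triangle-ish structure on $\{x,y_1,y_2\}$), keep the edge $x z_1$ or add edges $z_1 z_2$, etc., so that $x$ loses exactly two from its degree and the two $3$-vertices disappear.

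The key steps, in order, would be: (1) show $G'$ is a double tree by transporting a given \stf $(T_1,T_2)$ of $G$ through the reduction, exactly as in Claim \ref{2t} and the corresponding claim in Lemma \ref{drzrz}; (2) apply minimality of $G$ to get a balanced \stf $(S_1',S_2')$ of $G'$; (3) as in Claim \ref{trdft}, upgrade $(S_1',S_2')$ to a \stf of $G'$ in which $x$ has degree at least $2$ in each of the two trees — this is possible because after the reduction $x$ has degree $6$ in $G'$, so if $x$ were a leaf in one tree a tree-mapping swap (Proposition \ref{trivial2}) flips it to degree $6$, and all of $x$'s neighbors and the $z_i$'s are small (again using that $x$ is critical and $y_i$ poor/rich), so Proposition \ref{trivial} lets us ignore them; (4) extend the upgraded \stf of $G'$ back to a \stf $(S_1,S_2)$ of $G$ by distributing the $\le 4$ edges of $\delta_G(\{y_1,y_2\})$ appropriately between $S_1$ and $S_2$ — putting $y_1$'s two edges in one pattern and $y_2$'s two in the complementary pattern so that $x$ gains exactly one edge in each tree; (5) verify $|d_{S_1}(v)-d_{S_2}(v)|\le 5$: unchanged off $\{x,y_1,y_2\}$, at most $1$ at each small vertex $y_i$, and at $x$ we get $d_{S_i}(x)\in\{3,4,5\}$ since $d_{S_i}(x)=d_{S_i'}(x)+1$ with $d_{S_i'}(x)\in\{2,3,4\}$, hence $|d_{S_1}(x)-d_{S_2}(x)|=|8-2d_{S_2}(x)|\le 4<5$; then Proposition \ref{trivial} finishes, contradicting that $G$ is a counterexample.

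The main obstacle I expect is step (4): there is genuine case analysis in choosing how to route the (up to four) edges incident to $y_1,y_2$ and the edge $y_1y_2$ into $S_1$ and $S_2$ so that both $S_1$ and $S_2$ remain spanning trees (connectivity and acyclicity) \emph{and} $x$ picks up exactly one edge in each. The edge $y_1 y_2$ is the awkward one — it must go into exactly one of the two trees, and we must ensure the other tree still connects $y_1$ and $y_2$ through $x$ or through the $z_i$'s; the orientation of the reduction edge(s) $z_iy$ (or $z_1z_2$) in $(S_1',S_2')$, which we may fix by symmetry as in Lemma \ref{drzrz}, is what tells us which routing is legal. A secondary nuisance is the preliminary clean-up of degenerate configurations ($z_1=z_2$, or $z_i\in\{x,y_{3-i}\}$, or parallel edges), each of which should be knocked out either by an even simpler version of the same reduction or by citing Proposition \ref{dz}, Lemma \ref{2gross}, Lemma \ref{only}, or Lemma \ref{drzrz}; the possibility that $y_1$ or $y_2$ is rich rather than poor must also be tracked, since only then does the "extra neighbor $z_i$ is unique" statement need the richness/poorness bookkeeping. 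Once the routing lemma is pinned down, everything else is the routine degree arithmetic already illustrated above.
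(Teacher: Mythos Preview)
Your outline has the right shape (delete the local structure, apply minimality, reinsert), but it contains two concrete errors and misses the idea that actually makes the argument go through. First, the $z_i$ are \emph{big}, not small: the neighbors of $y_i$ are $x,y_{3-i},z_i$, and Lemma~\ref{aumoins23} forces at least two of them to be big, while $y_{3-i}$ is a $3$-vertex; hence $z_i$ is big and cannot be waved away via Proposition~\ref{trivial}. The paper in fact treats $z_1$ by an explicit two-case split on whether $d_{S_2'}(z_1)-d_{S_1'}(z_1)$ equals $5$ or is strictly smaller. Second, your step~(3) does not work as written: if $x$ is a leaf of $S_1'$ with leaf edge $e$, Proposition~\ref{trivial2} gives $\sigma(e)\in\delta_{G'}(x)$, so $S_1'-e+\sigma(e)$ still has $x$ as a leaf; the trick in Claim~\ref{trdft} was to interchange the two trees, which merely moves the leaf to the other side rather than achieving degree $\ge 2$ in both.

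The substantive gap is that the paper does the hard combinatorics \emph{before} the reduction rather than after. It first proves, via two preparatory claims using tree-mapping swaps together with Proposition~\ref{triangle} on the triangle $xy_1y_2$, that $G$ admits a spanning tree factorization $(T_1,T_2)$ with the specific crossed pattern $xy_1,y_2z_2\in E(T_1)$ and $xy_2,y_1z_1\in E(T_2)$. Only with this pattern in hand does it reduce, and the reduction is none of your candidates: delete $y_1,y_2$, introduce a \emph{new} vertex $y$, and add two parallel edges $e,f$ from $x$ to $y$ together with one edge $yz_2$. This keeps $d_{G'}(x)=8$ (so nothing like your step~(3) is needed), makes it immediate that $G'$ is a double tree (the crossed $(T_1,T_2)$ transports cleanly), and --- because $y$ is a $3$-vertex with two parallel edges to $x$ --- pins down the placement of $e,f,yz_2$ in any $(S_1',S_2')$ up to symmetry, which is exactly the control one needs to route the five edges $xy_1,xy_2,y_1y_2,y_1z_1,y_2z_2$ back in. Your plan, by contrast, leaves the reduction unspecified and postpones all of this routing difficulty to step~(4) without any mechanism to resolve it.
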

\begin{proof}
Suppose otherwise. By Lemma \ref{aumoins23}, there is exactly one edge between $y_1$ and $y_2$. Further, by Lemma \ref{drzrz}, for $i=1,2$, we obtain that $y_i$ is incident to an edge $y_iz_i$ with $z_i \notin \{x,y_{3-i}\}$, possibly $z_1=z_2$. We now prove through two claims that $G$ has a \stf which satisfies some extra properties. This allows for a recursive argument then.
\begin{Claim}\label{vgu}
There is a \stf $(U_1,U_2)$ of $G$ which satisfies $|\{xy_1,xy_2,y_1z_1,y_2z_2\}\cap E(U_1)|=2$.
\end{Claim}
\begin{proof}[Proof of Claim]
As $G$ is a double tree, there is \stf $(R_1,R_2)$ of $G$.
By symmetry, we may suppose that $|\{xy_1,xy_2,y_1z_1,y_2z_2\}\cap E(R_1)| \geq |\{xy_1,xy_2,y_1z_1,y_2z_2\}\cap E(R_2)|$. If $|\{xy_1,xy_2,y_1z_1,y_2z_2\}\cap E(R_1)|=2$, there is nothing to prove. If $|\{xy_1,xy_2,y_1z_1,y_2z_2\}\cap E(R_1)|=4$, then there is no edge leaving $\{y_1,y_2\}$ in $R_2$, a contradiction to $(R_1,R_2)$ being a \stf of $G$. We may hence suppose that $|\{xy_1,xy_2,y_1z_1,y_2z_2\}\cap E(R_1)|=3$. By symmetry, we may further suppose that $xy_1,y_1z_1\in E(R_1)$. As $(R_1,R_2)$ is a \stf of $G$, this yields that $y_1y_2 \in E(R_2)$. Now consider a tree-mapping function $\sigma:E(R_2)\rightarrow E(R_1)$. By Proposition \ref{trivial2}, we obtain that $\sigma(y_1y_2)\in \{xy_1,y_1z_1\}$. Let $(U_1,U_2)=(R_1-\sigma(y_1y_2)+y_1y_2,R_2-y_1y_2+\sigma(y_1y_2))$. Now $(U_1,U_2)$ is a \stf of $G$ with $|\{xy_1,xy_2,y_1z_1,y_2z_2\}\cap E(U_1)|=|\{xy_1,xy_2,y_1z_1,y_2z_2\}\cap E(R_1)|-1=2$.
\renewcommand{\qedsymbol}{$\blacksquare$}
\end{proof}

\begin{Claim}\label{rtg6zhu}
There is a \stf $(T_1,T_2)$ of $G$ which satisfies $xy_1,y_2z_2\in E(T_1)$ and $xy_2,y_1z_1\in E(T_2)$.
\end{Claim}
\begin{proof}[Proof of Claim]
Let $(U_1,U_2)$ like in Claim \ref{vgu}. By symmetry, we may suppose that $xy_1 \in E(U_1)$. By Claim \ref{vgu}, exactly one of the three edges $y_1z_1,xy_2$ and $y_2z_2$ is in $E(U_1)$. If $y_2z_2 \in E(U_1)$, there is nothing to prove. It hence suffices to consider the two remaining cases.
\begin{case}
$y_1z_1 \in E(U_1)$
\end{case}
\begin{proof}[Proof of Case]
As $U_2$ is a spanning tree and $y_1$ is a 3-vertex, we obtain that $y_1y_2 \in E(U_2)$. By Claim \ref{vgu}, we have $xy_2,y_2z_2 \in E(U_2)$. It follows that no edge of $E(U_1)$ is incident to $y_2$. This contradicts $U_1$ being a spanning tree of $G$.
\renewcommand{\qedsymbol}{$\blacksquare$}
\end{proof}
\begin{case}
$xy_2 \in E(U_1)$
\end{case}
\begin{proof}[Proof of Case]
Observe that $y_1y_2 \in E(U_2)$ as $U_1$ cannot contain the triangle $xy_1y_2$. Let $\sigma:E(U_1)\rightarrow E(U_2)$ be a tree-mapping function from $U_1$ to $U_2$. By Proposition \ref{triangle}, we obtain that either $\sigma(xy_1)\neq y_1y_2$ or $\sigma(xy_2)\neq y_1y_2$.

 First assume that $\sigma(xy_2)\neq y_1y_2$. By Proposition \ref{trivial2}, we obtain that $\sigma(xy_2)\in \delta_G(y_2)$, so  $\sigma(xy_2) = y_2z_2$. Let $(T_1,T_2)=(U_1-xy_2+y_2z_2,U_2-y_2z_2+xy_2)$. Now $(T_1,T_2)$ is a \stf of $G$ with $xy_1,y_2z_2\in E(T_1)$ and $xy_2,y_1z_1\in E(T_2)$.

 Now assume that $\sigma(xy_1)\neq y_1y_2$. By Proposition \ref{trivial2}, we obtain that $\sigma(xy_1)\in \delta_G(y_1)$, so  $\sigma(xy_1) = y_1z_1$. Let $(T_1,T_2)=(U_2-y_1z_1+xy_1,U_1-xy_1+y_1z_1)$. Now $(T_1,T_2)$ is a \stf of $G$ with $xy_1,y_2z_2\in E(T_1)$ and $xy_2,y_1z_1\in E(T_2)$.
\renewcommand{\qedsymbol}{$\blacksquare$}
\end{proof}
Now the case distinction is complete and so the proof of Claim \ref{rtg6zhu} is finished.
\renewcommand{\qedsymbol}{$\blacksquare$}
\end{proof}

Let $(T_1,T_2)$ like in Claim \ref{rtg6zhu}. By symmetry, we may suppose that $y_1y_2 \in E(T_1)$. Let $G'$ be obtained from $G$ by deleting $y_1$ and $y_2$ and creating a new vertex $y$, two new edges $e,f$ linking $x$ and $y$ and one edge linking $y$ and $z_2$. Further, let $ T'_1=T_1-\{y_1,y_2\}+\{e,yz_2\}$ and $T_2'=T_2-\{y_1,y_2\}+f$.  We obtain that $(T_1',T_2')$ is a \stf of $G'$, so $G'$ is a double tree. As $G'$ is smaller than $G$, we obtain that $G'$ has a \stf $(S_1',S_2')$ such that $|d_{S'_1}(v)-d_{S'_2}(v)|\leq 5$ for all $v \in V(G')$. As $S_1'$ and $S_2'$ are spanning trees and by symmetry, we may suppose that $e,yz_2 \in E(S'_1)$ and $f \in E(S'_2)$.

 Recall that $d_{S_2'}(z_1)-d_{S_1'}(z_1) \leq 5$. We now show how to obtain a \stf of $G$ of the desired form in two different cases which reflect whether the bound in this inequality is attained or not.
\setcounter{case}{0}
\begin{case}
$d_{S_2'}(z_1)-d_{S_1'}(z_1)<5$.
\end{case}
\begin{proof}[Proof of Case]
Let $S_1=S'_1-y+\{xy_1,y_1y_2,y_2z_2\}$ and $S_2=S'_2-y+\{xy_2,y_1z_1\}$.  Clearly, $(S_1,S_2)$ is a \stf of $G$. We have $|d_{S_1}(v)-d_{S_2}(v)|=|d_{S'_1}(v)-d_{S'_2}(v)|\leq 5$ for all $v \in V(G)-\{y_1,y_2,z_1\}$. Further, by construction, we have $d_{S_1}(z_1)-d_{S_2}(z_1)=d_{S_1'}(z_1)-d_{S_2'}(z_1)-1<5$ and by the case distinction, we have $d_{S_2}(z_1)-d_{S_1}(z_1)=d_{S_2'}(z_1)-d_{S_1'}(z_1)+1\leq 5$. This yields $|d_{S_1}(z_1)-d_{S_2}(z_1)|\leq 5$. As $y_1$ and $y_2$ are small, by Proposition \ref{trivial}, we obtain a contradiction to $G$ being a counterexample.
\renewcommand{\qedsymbol}{$\blacksquare$}
\end{proof}
\begin{case}
$d_{S_2'}(z_1)-d_{S_1'}(z_1)=5$. 
\end{case}
\begin{proof}[Proof of Case]
Let $S_1=S'_1-y+\{xy_2,y_1z_1,y_2z_2\}$ and $S_2=S'_2-y+\{xy_1,y_1y_2\}$.  Clearly, $(S_1,S_2)$ is a \stf of $G$. We have $|d_{S_1}(v)-d_{S_2}(v)|=|d_{S'_1}(v)-d_{S'_2}(v)|\leq 5$ for all $v \in V(G)-\{y_1,y_2,z_1\}$. Further, by construction and the case distinction, we have $d_{S_2}(z_1)-d_{S_1}(z_1)=d_{S_2'}(z_1)-d_{S_1'}(z_1)-1=4$, so $|d_{S_1}(z_1)-d_{S_2}(z_1)|\leq 5$. As $y_1$ and $y_2$ are small, by Proposition \ref{trivial}, we obtain a contradiction to $G$ being a counterexample.
\renewcommand{\qedsymbol}{$\blacksquare$}
\end{proof}
Now the case distinction is complete and so the proof of Lemma \ref{notadj} is finished.
\end{proof}
\subsubsection{New main lemma}\label{hstrrh}
After having excluded some degenerate cases in Section \ref{tech1}, we are now ready to provide the new main lemma for the improvement of the constant in Theorem \ref{main} from 6 to 5.
\begin{Lemma}\label{new}
Let $x$ be a critical 8-vertex and $(T_1,T_2)$ a \stf of $G$. Then $x$ is incident to at least two $(3,8)$-edges $xy_1,xy_2$ such that $y_1$ and $y_2$ are rich and $xy_i$ is not the special edge of $y_i$ with respect to $(T_1,T_2)$ for $i=1,2$.
\end{Lemma}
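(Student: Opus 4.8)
The plan is to argue by contradiction inside the minimum counterexample $G$. Assume $x$ is a critical $8$-vertex and $(T_1,T_2)$ a \stf of $G$ for which the conclusion fails, so that at most one of the $(3,8)$-edges at $x$ joins $x$ to a rich $3$-vertex via an edge that is not the special edge of that vertex. Since $x$ is critical, it is incident to exactly one $(2,8)$-edge $xw$ (so $w$ is a $2$-vertex) and to seven $(3,8)$-edges; let $y_1,\dots,y_7$ be the $3$-vertices they reach, which I take to be distinct (the case where $x$ is joined to a rich $3$-vertex by parallel edges needs only a minor separate argument). By Lemma \ref{aumoins23} each $y_i$ is rich or poor, by Lemma \ref{notadj} no edge of $G$ joins two distinct $y_i$, by Lemma \ref{drzrz} a poor $y_i$ is joined to $x$ by a single edge, and by Lemma \ref{2gross} both neighbours of $w$ are big so $w$ is adjacent to no $y_i$. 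The failure of the conclusion then yields at least six indices $i$ for which $y_i$ is \emph{bad}: either $y_i$ is poor, or $y_i$ is rich but $xy_i$ is the special edge of $y_i$ with respect to $(T_1,T_2)$.

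For a bad $y_i$, let $y_ip_i$ and $y_iq_i$ be the two edges at $y_i$ lying in the tree in which $y_i$ has degree $2$, and $y_is_i$ the single edge at $y_i$ in the other tree; by Proposition \ref{cfzz} the vertex $s_i$ is big, and by Lemmas \ref{notadj}, \ref{drzrz} and \ref{2gross} the three neighbours of $y_i$ avoid $\{w,y_1,\dots,y_7\}$, so $s_i$ is either $x$ itself or a big vertex outside the closed neighbourhood of $x$. The reduction I have in mind is: pick one bad $y_i$, delete $y_i$ and its three incident edges from $G$, and add back the single edge $p_iq_i$; the resulting graph $G'$ satisfies $|V(G')|=|V(G)|-1$ and $|E(G')|=|E(G)|-2$, and deleting the corresponding parts of $(T_1,T_2)$ and putting $p_iq_i$ into the tree that lost the edges $y_ip_i,y_iq_i$ exhibits a \stf of $G'$, so $G'$ is a double tree of strictly smaller order. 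By minimality, $G'$ has a \stf $(S_1,S_2)$ with $|d_{S_1}(v)-d_{S_2}(v)|\le5$ for all $v\in V(G')$, and one then reinserts $y_i$: if $p_iq_i\in E(S_j)$, replace $p_iq_i$ in $S_j$ by a path through $y_i$ and add $y_is_i$ to $S_{3-j}$, obtaining a \stf of $G$. By Proposition \ref{trivial} only big vertices need to be checked, and since $p_i$ and $q_i$ recover the degrees they had in $G'$, the only vertex at risk is $s_i$, which equals $x$ precisely when $xy_i$ is the special edge of $y_i$.

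This last verification is where I expect the real work to lie. If $xy_i$ is not the special edge of $y_i$, then $x\in\{p_i,q_i\}$, the reinsertion changes no degree at $x$, and the bound can fail only at the big vertex $s_i\ne x$, and only if $s_i$ is maximally unbalanced in $(S_1,S_2)$; if $xy_i$ is the special edge of $y_i$, then $x=s_i$ loses exactly one edge on the way to $G'$, and the bound at $x$ can fail only if $x$ is maximally unbalanced in $(S_1,S_2)$ and $xy_i$ is forced onto its larger side. In either extremal case the remedy is to pre-process $(S_1,S_2)$ by a suitable tree-mapping exchange (Propositions \ref{map} and \ref{trivial2}), in the spirit of Claim \ref{trdft}, so as to move $p_iq_i$ to the other tree or to relieve the imbalance at $x$ (or $s_i$) without creating a new violation. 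This produces a case distinction organised, as in the proof of Lemma \ref{notadj}, according to whether the relevant inequalities $d_{S_j}(x)-d_{S_{3-j}}(x)\le5$ and $d_{S_j}(s_i)-d_{S_{3-j}}(s_i)\le5$ are tight; the degenerate local configurations at $x$ that would otherwise break this bookkeeping are exactly those ruled out in Section \ref{tech1}, which is the reason those lemmas were established beforehand.
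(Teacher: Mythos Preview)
Your reduce-and-lift strategy is not the route the paper takes, and as written it has a real gap precisely in the case that the paper's argument is designed to eliminate. Suppose $y_i$ is poor and $xy_i$ is \emph{not} the special edge of $y_i$. Then $x\in\{p_i,q_i\}$ and the special neighbour $s_i$ is a big vertex different from $x$. Your reinsertion forces the edge $y_is_i$ into $S_{3-j}$ (the tree not containing $p_iq_i$), so the balance at $s_i$ can become $6$. Your proposed remedy, a tree-mapping exchange in the style of Claim \ref{trdft}, does not obviously work here: that trick succeeds when the affected neighbours are known to be small (so Proposition \ref{trivial} absorbs the damage), but the neighbours of $s_i$ are arbitrary big vertices about which you have no structural control. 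Lemmas \ref{drzrz} and \ref{notadj} say nothing about the neighbourhood of $s_i$; they are statements about $x$. Moreover, since your argument uses only a single bad $y_i$ and never exploits that there are at least six of them, it is implicitly aiming at a statement stronger than the lemma, which is a warning sign.

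The paper avoids this entirely by \emph{not} reducing to a smaller graph. Instead it passes from $(T_1,T_2)$ to an extremal factorization $(T_1^*,T_2^*)$ minimising the number of poor neighbours $y$ with $xy$ non-special, and proves (using a tree-mapping swap together with Lemmas \ref{drzrz} and \ref{notadj}) that in $(T_1^*,T_2^*)$ every poor $y$ has $xy$ special. Consequently, for every $y\in N_{T_1^*}(x)$ the edge $xy$ is the unique $T_1^*$-edge at $y$, so the set $X=\{x\}\cup N_{T_1^*}(x)$ has $\delta_{T_1^*}(X)=\emptyset$ while $V(G)\setminus X\neq\emptyset$; this contradicts $T_1^*$ being spanning. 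If you want to salvage your approach, you would first need an analogue of this extremal step to guarantee that some bad $y_i$ has $s_i=x$; only then does your reinsertion, patched as you describe, go through.
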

\begin{proof}
Suppose otherwise. Then there is a spanning tree factorization $(T_1,T_2)$ of $G$ for which $x$ is incident to at most one $(3,8)$-edge $xy$ such that $y$ is rich and $xy$ is not the special edge of $y$ with respect to $(T_1,T_2)$. Among all such spanning tree factorizations, choose $(T_1^*,T_2^*)$ in a way that the number of $(3,8)$-edges $xy$ such that $y$ is poor and $xy$ is not the special edge of $y$ with respect to $(T_1^*,T_2^*)$ is minimized.
\begin{Claim}\label{sert}
For all $(3,8)$-edges $xy$ such that $y$ is poor we have that $xy$ is the special edge of $y$ with respect to $(T_1^*,T_2^*)$.
\end{Claim}
\begin{proof}[Proof of Claim]
Suppose otherwise, so there is at least one edge $xz$ such that $z$ is poor and $xz$ is not the special edge of $z$ with respect to $(T_1^*,T_2^*)$. By symmetry, we may suppose that $xz \in E(T_1^*)$. As $xz$ is not the special edge of $z$ and $z$ is a 3-vertex, we obtain that $z$ is incident to two more edges $zz_1\in E(T_1^*)$ and $zz_2 \in E(T_2^*)$. By definition, $zz_2$ is the special edge of $z$ with respect to $(T_1^*,T_2^*)$. We obtain by Proposition \ref{cfzz} that $z_2$ is big. Let $\sigma:E(T_2^*)\rightarrow E(T_1^*)$ be a tree-mapping function.  Let $U_1=T^*_1-\sigma(zz_2)+zz_2$ and $U_2=T_2^*-zz_2+\sigma(zz_2)$. By Proposition \ref{trivial2}, we obtain that $\sigma(zz_2)\in xz,zz_1$. If $\sigma(zz_2)=zz_1$, then $zz_1$ is the special edge of $z$ with repect to $(U_1,U_2)$. It follows from Proposition \ref{cfzz} that $z_1$ is big. As $x$ is big by assumption, we hence obtain that all of $x,z_1$ and $z_2$ are big. This contradicts $z$ being poor.

We hence have $\sigma(zz_2)=xz$, so $xz$ is the special edge of $z$ with respect to $(U_1,U_2)$. Further, by Lemma \ref{notadj}, we obtain that $\delta_G(y)\cap\delta_G(z)= \emptyset$ for all $y \in N_G(x)-z$. It follows that for all $xy\in \delta_G(x)$ with $y \neq z$, we have that $xy$ is the special edge of $y$ with respect to $(U_1,U_2)$ if and only if  $xy$ is the special edge of $y$ with respect to $(T^*_1,T^*_2)$. As there is only one edge between $x$ and $z$ by Lemma \ref{drzrz}, the number of $(3,8)$-edges $xy$ such that $y$ is poor and $xy$ is not the special edge of $y$ with respect to $(U_1,U_2)$ is strictly smaller than the number of $(3,8)$-edges $xy$ such that $y$ is poor and $xy$ is not the special edge of $y$ with respect to $(T^*_1,T^*_2)$. Further, the number of $(3,8)$-edges $xy$ such that $y$ is rich and $xy$ is not the special edge of $y$ with respect to $(U_1,U_2)$ is the same as the number of $(3,8)$-edges $xy$ such that $y$ is rich and $xy$ is not the special edge of $y$ with respect to $(T^*_1,T^*_2)$. We hence obtain a contradiction to the choice of $(T_1^*,T_2^*)$.
\renewcommand{\qedsymbol}{$\blacksquare$}
\end{proof}
By symmetry and the assumption, we may suppose that there is no $(3,8)$-edge $xy\in E(T_1^*)$ such that $y$ is rich and $xy$ is not the special edge of $y$ with respect to $(T_1^*,T_2^*)$. Let $X=x \cup N_{T_1^*}(x)$. By definition, we obtain $\delta_{T_1^*}(X)\cap \delta_G(x)=\emptyset$. For all rich 3-vertices $y \in X$, we obtain $\delta_{T_1^*}(X)\cap \delta_G(y)=\emptyset$ by assumption. For all poor 3-vertices $y \in X$, we obtain $\delta_{T_1^*}(X)\cap \delta_G(y)=\emptyset$ by Claim \ref{sert}.

 As $x$ is critical, there is a unique 2-vertex $z$ in $N_G(x)$. If $z \notin X$, we obtain $\delta_{T_1^*}(X)=\emptyset$ and $z \in V(G)-X$, so $V(G)-X \neq \emptyset$. This contradicts $T_1^*$ being a spanning tree. If $z \in X$, then, as $T_2^*$ is a spanning tree, we obtain $\delta_{T_1^*}(X)\cap \delta_G(z)=\emptyset$ and so again $\delta_{T_1^*}(X)=\emptyset$. Further, by Lemmas \ref{2gross} and \ref{only}, there is a big vertex $z'\in N_G(z)-x$. As $x$ is critical, all neighbors of $x$ in $G$ are small and so $z' \in V(G)-X$. Again, we obtain $V(G)-X \neq \emptyset$, a contradiction to $T_1^*$ being a spanning tree.
\end{proof}
\subsection{The discharging procedure}\label{charge}

We are now ready to finish the proof of Theorem \ref{main} by a discharging procedure which uses the structural results obtained in Section \ref{struc}.

The initial charge of every vertex $v \in V(G)$ is $d_G(v)$. We now describe the discharging rule. Let $(T_1,T_2)$ be a \stf  of $G$. Every big vertex $x$ sends the following charge along every $(2,big)$-edge or $(3,big)$-edge $e=xy$ it is incident to:
\begin{itemize}
\item 1 if $y$ is a 2-vertex,
\item $\frac{1}{2}$ if $y$ is a poor 3-vertex,
\item $\frac{1}{2}$ if $y$ is a rich 3-vertex and $e$ is the special edge of $y$ with respect to $(T_1,T_2)$,
\item $\frac{1}{4}$ if $y$ is a rich 3-vertex and $e$ is not the special edge of $y$ with respect to $(T_1,T_2)$.
\end{itemize}
For every $v \in V(G)$, we denote by $c_f(v)$ the final charge of $v$ after the discharging procedure.
\begin{Claim}
$c_f(v)\geq 4$ for all $v \in V(G)$.
\end{Claim}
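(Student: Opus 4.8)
The plan is to establish $c_f(v)\geq 4$ for every $v\in V(G)$ by a case analysis according to $d_G(v)$. The key observation guiding the analysis is that the discharging rule moves charge only from big vertices to $2$-vertices and $3$-vertices: a big vertex never receives charge, a $2$-vertex or $3$-vertex never sends charge, and a vertex of degree $4,5,6$ or $7$ neither sends nor receives any. Since $T_1$ and $T_2$ are both spanning, $G$ has no $1$-vertex, so these cases are exhaustive. Consequently a vertex $v$ with $4\leq d_G(v)\leq 7$ satisfies $c_f(v)=d_G(v)\geq 4$ immediately, and it remains to handle $2$-vertices, $3$-vertices, and big vertices.

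For the small vertices I would simply plug in the structural lemmas of Section \ref{struc}. If $v$ is a $2$-vertex, then by Lemma \ref{2gross} it is incident to two $(2,big)$-edges, and along each the big endpoint sends charge $1$ to $v$; hence $c_f(v)=2+1+1=4$. If $v$ is a $3$-vertex, then by Lemma \ref{aumoins23} it is rich or poor. A poor $v$ is incident to exactly two $(3,big)$-edges and receives $\frac{1}{2}$ along each, so $c_f(v)=3+\frac{1}{2}+\frac{1}{2}=4$. A rich $v$ has all three of its edges being $(3,big)$-edges, exactly one of which is its special edge with respect to $(T_1,T_2)$ (a $3$-vertex having degree $1$ in exactly one of $T_1,T_2$); along the special edge it receives $\frac{1}{2}$ and along each of the other two it receives $\frac{1}{4}$, so $c_f(v)=3+\frac{1}{2}+\frac{1}{4}+\frac{1}{4}=4$.

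The substantive part is bounding the charge that leaves a big vertex $v$. Lemma \ref{only} gives that $v$ is incident to at most one $(2,big)$-edge, along which it sends $1$, while it sends at most $\frac{1}{2}$ along each of its remaining $d_G(v)-1$ edges; this crude estimate yields $c_f(v)\geq d_G(v)-1-\frac{1}{2}(d_G(v)-1)=\frac{d_G(v)-1}{2}$, which is at least $4$ once $d_G(v)\geq 9$. So assume $d_G(v)=8$, where the crude bound only gives $\frac{7}{2}$ and more care is needed. If $v$ has no $(2,8)$-edge, it sends at most $\frac{1}{2}$ along each of its eight edges, so $c_f(v)\geq 4$. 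If $v$ has a $(2,8)$-edge but is not critical, then at most six of its other seven edges are $(3,8)$-edges; the remaining one cannot be a second $(2,big)$-edge (again Lemma \ref{only}), so it leads to a big vertex or to a vertex of degree in $\{4,\ldots,7\}$ and carries no charge, whence $v$ sends at most $1+6\cdot\frac{1}{2}=4$ and $c_f(v)\geq 4$. Finally, if $v$ is critical, Lemma \ref{new} supplies two $(3,8)$-edges $vy_1,vy_2$ incident to $v$ with $y_1,y_2$ rich and $vy_i$ not the special edge of $y_i$, so $v$ sends only $\frac{1}{4}$ along each of these; together with $1$ along the $(2,8)$-edge and at most $\frac{1}{2}$ along each of the five further $(3,8)$-edges, $v$ sends at most $1+5\cdot\frac{1}{2}+2\cdot\frac{1}{4}=4$, so $c_f(v)\geq 4$.

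The one genuinely non-routine step is the critical $8$-vertex: this is precisely the configuration for which the crude count fails, and by exactly $\frac{1}{2}$, so that Lemma \ref{new} --- together with the preparatory Lemmas \ref{drzrz} and \ref{notadj}, which exclude the troublesome degenerate neighborhoods --- is exactly what is needed to recover this $\frac{1}{2}$. Everything else is obtained by substituting the structural results of Section \ref{struc} into elementary arithmetic.
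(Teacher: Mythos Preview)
Your proof is correct and follows essentially the same case analysis as the paper's own proof, invoking the same structural lemmas (\ref{2gross}, \ref{aumoins23}, \ref{only}, \ref{new}) at the same points and with the same arithmetic. The only differences are cosmetic: you treat $d_G(v)\geq 9$ via the crude bound before turning to $d_G(v)=8$, and you add the explicit remark that $G$ has no $1$-vertex and that charge flows only from big to $\{2,3\}$-vertices, both of which are implicit in the paper.
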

\begin{proof}[Proof of Claim]
Let $v \in V(G)$. 

%If $4 \leq d_G(v) \leq 7$, then $v$ neither sends nor receives any charge, so $c_f(v)=d_G(v)\geq 4$.

If $v$ is a 2-vertex, then by Lemma \ref{2gross}, we obtain that $v$ is incident to two $(2,big)$-edges. Hence $v$ receives a charge of 1 along both its incident edges and does not send any charge. This yields $c_f(v)=d_G(v)+1+1=4$.

If $v$ is a 3-vertex, by Lemma \ref{aumoins23}, we have that $v$ is either poor or rich. If $v$ is poor, then $v$ receives a charge of $\frac{1}{2}$ along both the $(3,big)$-edges $v$ is incident to and does not send any charge. This yields $c_f(v)=d_G(v)+\frac{1}{2}+\frac{1}{2}=4$.  If $v$ is rich, then $v$ receives a charge of $\frac{1}{2}$ along its special edge and a charge of $\frac{1}{4}$ along the other two edges  $v$ is incident to and does not send any charge. This yields $c_f(v)=d_G(v)+\frac{1}{2}+\frac{1}{4}+\frac{1}{4}=4$.

If $4 \leq d_G(v) \leq 7$, then $v$ neither sends nor receives any charge, so $c_f(v)=d_G(v)\geq 4$.

Now let $v$ be an 8-vertex. If $v$ is critical, then by Lemma \ref{new}, $v$ is incident to at least two $(3,8)$-edges $xy_1,xy_2$ such that $y_1$ and $y_2$ are rich and $xy_i$ is not the special edge of $y_i$ with respect to $(T_1,T_2)$ for $i=1,2$. We obtain that $v$ sends a charge of $\frac{1}{4}$ along both these edges. Further, we have that $v$ sends a charge of $\frac{1}{2}$ along the remaining $(3,8)$-edges $v$ is incident to and $1$ along the $(2,8)$-edge $v$ is incident to. This yields $c_f(v)\geq d_G(v)-2 \cdot \frac{1}{4}-5 \cdot \frac{1}{2}-1=4$.

 Otherwise, by Lemma \ref{only}, either $v$ is not incident to a $(2,8)$-edge or $v$ is incident to exactly one $(2,8)$-edge and at most six $(3,8)$-edges. In the former case, we obtain that $v$ sends a charge of at most $\frac{1}{2}$ along all the edges $v$ is incident to yielding $c_f(v)\geq d_G(v)-8\cdot \frac{1}{2}=4$.

  In the latter case, we obtain that $v$ sends a charge of 1 along one edge, no charge along along at least one edge and a charge of at most $\frac{1}{2}$ along the remaining edges yielding $c_f(v)\geq d_G(v)-1-6\cdot \frac{1}{2}=4$. 

If $d_G(v)\geq 9$, then by Lemma \ref{only}, $v$ is incident to at most one $(2,big)$-edge. It follows that $v$ sends a charge of 1 along at most one edge and a charge of at most $\frac{1}{2}$ along all other edges $v$ is incident to. This yields $c_f(v)\geq d_G(v)-1-\frac{1}{2}(d_G(v)-1)=\frac{1}{2}d_G(v)-\frac{1}{2}\geq 4$.
\renewcommand{\qedsymbol}{$\blacksquare$}
\end{proof}

As the total final charge is the same as the total initial charge, we obtain
\begin{equation*}
|E(G)|=\frac{1}{2}\sum_{v \in V(G)}d_G(v)=\frac{1}{2}\sum_{v \in V(G)}c_f(v)\geq\frac{1}{2}\sum_{v \in V(G)}4=2|V(G)|,
\end{equation*}

a contradiction to Proposition \ref{aretes}. This finishes the proof of Theorem \ref{main}.

\section{The general case}\label{neu}

This section is dedicated to proving Theorem \ref{newmain}. We need one more definition which is useful throughout the proof.
For some nonnegative integer $k$, a constant $c_k \in \mathbb{R}$ is called {\it $k$-feasible} if every $k$-multiple tree has a \stf $(T_1,\ldots,T_k)$ such that $|d_{T_i}(v)-\frac{d_G(v)}{k}|\leq c_k$ for all $v \in V(G)$ and $i \in \{1,\ldots,k\}$. We will show later how the existence of an appropriate $k$-feasible constant for every positive integer $k$ easily implies Theorem \ref{newmain}.

We first give some results that show how to recursively obtain $k$-feasible constants. We then combine these results to prove Theorem \ref{newmain}.

%\subsection{New lemmas}\label{nouveau}
The following result shows that feasible constants are available for small values of $k$.
\begin{Lemma}\label{rfde}
The constant $0$ is 1-feasible and $\frac{5}{2}$ is 2-feasible.
\end{Lemma}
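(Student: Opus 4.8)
The plan is to treat the two claims separately, each being a short deduction from facts already established. For the $1$-feasibility claim, I would simply unwind the definitions: a $1$-multiple tree $G$ admits a \stf $(T_1)$, and by definition of a \stf this forces $E(T_1)=E(G)$, i.e.\ $T_1=G$. Hence $d_{T_1}(v)=d_G(v)=\frac{d_G(v)}{1}$ for every $v\in V(G)$, so $\bigl|d_{T_1}(v)-\frac{d_G(v)}{1}\bigr|=0$. This shows $0$ is $1$-feasible with no further work.

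For the $2$-feasibility claim, I would start from an arbitrary double tree $G$ and invoke Theorem \ref{main} to obtain a \stf $(T_1,T_2)$ of $G$ with $|d_{T_1}(v)-d_{T_2}(v)|\leq 5$ for all $v\in V(G)$. The key observation is that, since $(T_1,T_2)$ is a \stf, every edge of $G$ lies in exactly one of $T_1,T_2$, so $d_{T_1}(v)+d_{T_2}(v)=d_G(v)$ for all $v$. Therefore, for $i\in\{1,2\}$,
\[
d_{T_i}(v)-\frac{d_G(v)}{2}=d_{T_i}(v)-\frac{d_{T_1}(v)+d_{T_2}(v)}{2}=\frac{d_{T_i}(v)-d_{T_{3-i}}(v)}{2},
\]
whence $\bigl|d_{T_i}(v)-\frac{d_G(v)}{2}\bigr|=\frac12|d_{T_1}(v)-d_{T_2}(v)|\leq\frac52$. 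This is exactly the statement that $\frac52$ is $2$-feasible.

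There is essentially no obstacle here: all the real difficulty is already contained in Theorem \ref{main}, which is assumed proved by the previous section. The only thing to get right is the translation between the ``pairwise difference'' formulation used in Theorem \ref{main} and the ``deviation from the average'' formulation built into the notion of a $k$-feasible constant; the identity $d_{T_1}(v)+d_{T_2}(v)=d_G(v)$ handles this cleanly and explains why the bound $5$ becomes $\frac52$. If anything, the one point to be mildly careful about is the $k=1$ case, where one must note that a \stf with $k=1$ really does force the single tree to span all edges of $G$, so the deviation is genuinely $0$ rather than merely small.
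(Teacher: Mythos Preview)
Your proof is correct and follows essentially the same route as the paper's own argument: the $k=1$ case is disposed of by noting that a $1$-multiple tree is just a tree, and the $k=2$ case is obtained from Theorem~\ref{main} via the identity $d_{T_1}(v)+d_{T_2}(v)=d_G(v)$ and the same one-line computation $|d_{T_i}(v)-\tfrac{d_G(v)}{2}|=\tfrac12|d_{T_i}(v)-d_{T_{3-i}}(v)|\leq \tfrac52$.
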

\begin{proof}
The first part follows immediately from the fact that a 1-multiple tree is a tree. For the second part, let $G$ be a double tree. By Theorem \ref{main}, there exists a \stf $(T_1,T_2)$ of $G$ such that $|d_{T_1}(v)-d_{T_2}(v)|\leq 5$ for all $v \in V(G)$. For every $i \in \{1,2\}$ and $v \in V(G)$, we obtain $|d_{T_i}(v)-\frac{d_G(v)}{2}|=|d_{T_i}(v)-\frac{d_{T_i}(v)+d_{T_{3-i}}(v)}{2}|=|\frac{d_{T_i}(v)-d_{T_{3-i}}(v)}{2}|\leq \frac{5}{2}$.
\end{proof}
We next show how to conclude the existence of a $2k$-feasible constant from the existence of a $k$-feasible constant. While this result is not strictly necessary to solve Conjecture \ref{ddfd}, we need it to obtain the logarithmic bound in Theorem \ref{newmain}.
\begin{Lemma}\label{uhhu}
Suppose that for some positive integer $k$, there is a $k$-feasible constant $c_k$. Then $c_k+\frac{5}{2}$ is a $2k$-feasible constant.
\end{Lemma}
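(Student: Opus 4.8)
The plan is to prove this by splitting a $2k$-multiple tree into two $k$-multiple trees, balancing each one separately using the $k$-feasible constant $c_k$, and then pairing up the two halves and balancing the pairs using the $2$-feasible constant $\frac{5}{2}$ from Lemma \ref{rfde}. So let $G$ be a $2k$-multiple tree with \stf $(T_1,\ldots,T_{2k})$. Set $H_1=\bigcup_{i=1}^k E(T_i)$ and $H_2=\bigcup_{i=k+1}^{2k} E(T_i)$, viewed as spanning subgraphs of $G$; then $H_1$ is a $k$-multiple tree (with \stf $(T_1,\ldots,T_k)$) and likewise $H_2$, and $E(G)=E(H_1)\,\dot\cup\,E(H_2)$. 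First I would apply the $k$-feasibility of $c_k$ to $H_1$ to get a \stf $(S_1,\ldots,S_k)$ of $H_1$ with $|d_{S_i}(v)-\frac{d_{H_1}(v)}{k}|\leq c_k$ for all $v$ and $i$, and similarly a \stf $(S_{k+1},\ldots,S_{2k})$ of $H_2$ with $|d_{S_i}(v)-\frac{d_{H_2}(v)}{k}|\leq c_k$.

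Next I would handle the interaction between the two halves. The natural object is the auxiliary graph $G'$ whose edge set is $E(H_1)\cup E(H_2)$ partitioned so that, for each $j\in\{1,\ldots,k\}$, the pair $(S_j,S_{j+k})$ is an ordered pair of edge-disjoint spanning trees of $G$. Applying Lemma \ref{rfde} is not quite immediate because we do not want to destroy the balance already achieved within each half; instead, for each $j$ I would consider the double tree with edge set $E(S_j)\cup E(S_{j+k})$ and apply Theorem \ref{main} (equivalently the $2$-feasibility of $\frac{5}{2}$) to obtain a \stf $(T'_j,T'_{j+k})$ of this double tree with $|d_{T'_j}(v)-\frac{d_{S_j}(v)+d_{S_{j+k}}(v)}{2}|\leq\frac{5}{2}$ for all $v$. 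Crucially this rebalancing only moves edges between $S_j$ and $S_{j+k}$, so the union over $j$ of all the $E(T'_j)$ still equals $E(G)$, and $(T'_1,\ldots,T'_{2k})$ is a \stf of $G$.

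Finally I would put the two estimates together by the triangle inequality. Fix $v\in V(G)$ and $j\in\{1,\ldots,k\}$. Write $a=d_{S_j}(v)$, $b=d_{S_{j+k}}(v)$. Since $|a-\frac{d_{H_1}(v)}{k}|\leq c_k$ and $|b-\frac{d_{H_2}(v)}{k}|\leq c_k$, we get $|\frac{a+b}{2}-\frac{d_{H_1}(v)+d_{H_2}(v)}{2k}|\leq c_k$; but $d_{H_1}(v)+d_{H_2}(v)=d_G(v)$, so $|\frac{a+b}{2}-\frac{d_G(v)}{2k}|\leq c_k$. Combining with $|d_{T'_j}(v)-\frac{a+b}{2}|\leq\frac{5}{2}$ yields $|d_{T'_j}(v)-\frac{d_G(v)}{2k}|\leq c_k+\frac{5}{2}$, and the same bound holds for index $j+k$. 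Hence $c_k+\frac{5}{2}$ is $2k$-feasible.

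The main point requiring care — not an obstacle so much as the step one must state precisely — is that the rebalancing in the second paragraph is carried out independently on each disjoint pair $\{E(S_j)\cup E(S_{j+k})\}$, so that applying Theorem \ref{main} within one pair does not interfere with another pair, does not create or lose edges globally, and preserves the property that the resulting $2k$ trees form a \stf of $G$; once this bookkeeping is made explicit the arithmetic is exactly the triangle-inequality computation above.
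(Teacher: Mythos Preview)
Your proposal is correct and follows essentially the same approach as the paper: split the $2k$-multiple tree into two $k$-multiple trees $H_1,H_2$, balance each using $k$-feasibility of $c_k$, pair up the resulting trees into $k$ double trees, rebalance each pair using the $2$-feasibility of $\tfrac{5}{2}$, and combine via the triangle inequality. The only differences are notational, and your final paragraph correctly identifies the bookkeeping point that the pairwise rebalancings are independent and hence yield a \stf of $G$.
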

\begin{proof}
Let $G$ be a $2k$-multiple tree and $(U_1,\ldots,U_{2k})$ a \stf of $G$. Let $H_1=(V(G), E(U_1)\cup \ldots \cup E(U_k))$ and $H_2=(V(G), E(U_{k+1})\cup \ldots \cup E(U_{2k}))$. Clearly, both $H_1$ and $H_2$ are $k$-multiple trees. Hence, by assumption, there exist a spanning tree factorization $(U_1',\ldots,U_k')$ of $H_1$ and a \stf $(U_{k+1}',\ldots,U_{2k}')$ of $H_2$ such that $|d_{U'_i}(v)-\frac{d_{H_1}(v)}{k}| \leq c_k$ for all $v \in V(G)$ and $i \in \{1,\ldots,k\}$ and $|d_{U'_i}(v)-\frac{d_{H_2}(v)}{k}| \leq c_k$ for all $v \in V(G)$ and $i \in \{k+1,\ldots,2k\}$. Now, for $i \in \{1,\ldots,k\}$, let $F_i=(V(G),E(U_i')\cup E(U_{k+i}'))$. Clearly, $F_i$ is a double tree for all $i \in \{1,\ldots,k\}$. By Lemma \ref{rfde}, there is a spanning tree factorization $(T_i,T_{k+i})$ of $F_i$ such that $|d_{T_i}(v)-\frac{d_{F_i}(v)}{2}| \leq \frac{5}{2}$ and $|d_{T_{k+i}}(v)-\frac{d_{F_i}(v)}{2}| \leq \frac{5}{2}$ for all  $v \in V(G)$ and $i \in \{1,\ldots,k\}$. For all $v \in V(G)$ and $i \in \{1,\ldots,k\}$, we obtain 
\begin{align*}|d_{T_i}(v)-\frac{d_{G}(v)}{2k}| &= |d_{T_i}(v)-\frac{d_{F_i}(v)}{2}+\frac{1}{2}(d_{F_i}(v)-\frac{d_{G}(v)}{k})|\\
&\leq |d_{T_i}(v)-\frac{d_{F_i}(v)}{2}|+\frac{1}{2}|d_{F_i}(v)-\frac{d_{G}(v)}{k}|\\
&\leq \frac{5}{2}+\frac{1}{2}|d_{U'_i}(v)+d_{U'_{k+i}}(v)-\frac{d_{H_1}(v)+d_{H_2}(v)}{k}|\\
&\leq \frac{5}{2}+\frac{1}{2}(|d_{U'_i}(v)-\frac{d_{H_1}(v)}{k}|+|d_{U'_{k+i}}(v)-\frac{d_{H_2}(v)}{k}|)\\
&\leq \frac{5}{2}+\frac{1}{2}(c_k+c_k)\\
&=c_k+\frac{5}{2}.
\end{align*}

Similarly, we have $|d_{T_i}(v)-\frac{d_{G}(v)}{2k}|\leq c_k+\frac{5}{2}$ for all $i \in \{k+1,\ldots,2k\}$. It follows that $(T_1,\ldots,T_k)$ has the desired properties. This finishes the proof.
\end{proof}
We now wish to obtain a second result which allows to get a $(k+1)$-feasible constant from a $k$-feasible constant. In order to prove this result, we first need the following lemma whose proof is pretty technical.

\begin{Lemma}\label{baum}
Suppose that for some positive integer $k$, there exists a $k$-feasible constant $c_k$. Let $G$ be a $(k+1)$-multiple tree. Then there is a \stf $(T_1,\ldots,T_{k+1})$ of $G$ such that $|d_{T_1}(v)-\frac{d_G(v)}{k+1}|\leq \frac{k(c_k+5)}{k+1}$ for all $v \in V(G)$.
\end{Lemma}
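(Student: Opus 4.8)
The plan is to extract one spanning tree $T_1$ from the $(k+1)$-multiple tree $G$ in such a way that the leftover graph $G' = G - E(T_1)$ is a $k$-multiple tree to which we can apply the hypothesis, while simultaneously controlling $d_{T_1}(v)$ at every vertex. The natural target is $d_{T_1}(v) \approx \frac{d_G(v)}{k+1}$; the point is that if $T_1$ is roughly a $\frac{1}{k+1}$-fraction of the edges at each vertex, then $G'$ has $d_{G'}(v) = d_G(v) - d_{T_1}(v) \approx \frac{k}{k+1}d_G(v)$ at each vertex, and applying the $k$-feasible constant $c_k$ to $G'$ bounds $|d_{T_i}(v) - \frac{d_{G'}(v)}{k}|$, which combines with the bound on $d_{T_1}(v)$ to give the claimed estimate. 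The bound $\frac{k(c_k+5)}{k+1}$ suggests that the "$5$" is the two-spanning-tree slack (from Theorem \ref{main}/Lemma \ref{rfde}) entering once, amplified by the $\frac{k}{k+1}$ factor when we translate a bound relative to $d_{G'}/k$ into a bound relative to $d_G/(k+1)$.

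Concretely, I would proceed as follows. First, start from an arbitrary \stf $(U_1,\dots,U_{k+1})$ of $G$ and group the trees as $H_1 = (V(G), E(U_1))$ (a single spanning tree) and $H_k = (V(G), E(U_2)\cup\dots\cup E(U_{k+1}))$ (a $k$-multiple tree). The issue is that $U_1$ alone need not be balanced against $H_k$ at all. So instead I would pass to a two-tree rebalancing: consider, for a well-chosen pairing, double trees formed by $U_1$ together with one of the other trees, apply Theorem \ref{main} (equivalently Lemma \ref{rfde}) to balance each such pair, and iterate. More precisely, the cleaner route is: take the \stf given by the $k$-feasible constant applied to $G$ itself after first reserving $U_1$; that is, apply the $k$-feasible hypothesis to $H_k$ to get $(U_2',\dots,U_{k+1}')$ with $|d_{U_i'}(v) - \frac{d_{H_k}(v)}{k}| \le c_k$, then repeatedly form the double tree $F_i = (V(G), E(U_1)\cup E(U_i'))$ and rebalance it via Lemma \ref{rfde} into $(T_1^{(i)}, T_i)$. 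Running through $i = 2,\dots,k+1$ in sequence, letting $T_1$ evolve, each step redistributes edges between the current $T_1$ and one $U_i'$ so that $|d_{T_1^{(i)}}(v) - \frac{d_{F_i}(v)}{2}| \le \frac{5}{2}$. After all $k$ steps one telescopes: $T_1$ has absorbed, at each vertex, roughly a $\frac{1}{k+1}$ share, with the accumulated error being a geometric-type sum controlled by Proposition \ref{gzigzi}, giving exactly $\frac{k(c_k+5)}{k+1}$ in the limit (and hence as an achievable value, since degrees are integers — here Proposition \ref{ddaqd} is the tool to convert the limiting bound into an exact one).

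The main obstacle I expect is the bookkeeping of the iterated rebalancing: when we rebalance the pair $(T_1, U_i')$ via Theorem \ref{main}, the edges of $U_i'$ change, which in principle could disturb the bound $|d_{U_j'}(v) - \frac{d_{H_k}(v)}{k}|$ for the already-processed or not-yet-processed $j$'s — but it does not, because the rebalancing of $F_i$ only moves edges between $T_1$ and the $i$-th part, never touching parts $j \ne i$. One must phrase this carefully: maintain the invariant that after processing indices $2,\dots,m$, the current first tree $S_1$ satisfies a bound of the form $|d_{S_1}(v) - \alpha_m d_G(v)| \le \beta_m$ with $\alpha_m, \beta_m$ given by the recursion $\alpha_m = \frac12(\alpha_{m-1} + \text{(share of }U_m'))$, $\beta_m = \frac12\beta_{m-1} + \frac52 + (\text{error from }U_m')$, while each untouched $U_j'$ retains its original bound. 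Solving this recursion with Proposition \ref{gzigzi} and the fact that $\sum$ of the shares works out to $\frac{1}{k+1}$ yields the coefficient $\frac{k(c_k+5)}{k+1}$; the "$+5$" rather than "$+\frac52$" appears because both the $\frac52$ slack from Lemma \ref{rfde} and the $c_k$-type error feed into $\beta$, and the geometric sum $\sum_{j\ge 1} 2^{-j}\cdot(\text{const})$ doubles the per-step contribution in the limit. The remaining work after setting up the recursion is routine arithmetic, which I would not spell out in full here.
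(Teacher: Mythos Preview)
Your overall architecture is right --- iteratively rebalance a designated first tree against one other tree using Lemma~\ref{rfde}, track a linear recursion on the degrees, and invoke Propositions~\ref{gzigzi} and~\ref{ddaqd} to pass to the limit --- and this is exactly what the paper does. But the specific scheme you describe has a genuine gap: you apply the $k$-feasible hypothesis \emph{once}, to the fixed graph $H_k = G - E(U_1)$, and then rebalance the evolving $T_1$ against the resulting trees $U_2',\dots,U_{k+1}'$. Each of those satisfies $d_{U_i'}(v)\approx \frac{d_{H_k}(v)}{k}=\frac{d_G(v)-d_{U_1}(v)}{k}$, so your recursion is (up to bounded error)
\[
d_{T_1^{(m)}}(v)\;\approx\;\tfrac12\,d_{T_1^{(m-1)}}(v)+\tfrac12\cdot\frac{d_G(v)-d_{U_1}(v)}{k},
\]
whose fixed point is $\frac{d_G(v)-d_{U_1}(v)}{k}$, not $\frac{d_G(v)}{k+1}$. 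Since $d_{U_1}(v)$ is completely uncontrolled, this target can be arbitrarily far from $\frac{d_G(v)}{k+1}$, and no finite number of steps (nor the limit) gives the claimed bound. Your sentence ``$\sum$ of the shares works out to $\tfrac{1}{k+1}$'' is where the argument breaks: the shares are all equal to the \emph{same} quantity $\frac{d_{H_k}(v)}{k}$, and a contraction toward a constant converges to that constant.

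The paper repairs this by re-applying the $k$-feasible hypothesis at \emph{every} step of an infinite iteration: at step $j$ one takes $H^j=G-E(T_1^j)$, balances it into $(U_2^j,\dots,U_{k+1}^j)$, and then rebalances only the pair $(T_1^j,U_2^j)$ via Lemma~\ref{rfde} to produce $T_1^{j+1}$. Now $d_{U_2^j}(v)\approx \frac{d_G(v)-d_{T_1^j}(v)}{k}$ depends on the \emph{current} $T_1^j$, and the recursion becomes
\[
d_{T_1^{j+1}}(v)\;\approx\;\Bigl(\tfrac12-\tfrac{1}{2k}\Bigr)d_{T_1^{j}}(v)+\frac{d_G(v)}{2k},
\]
with fixed point exactly $\frac{d_G(v)}{k+1}$ and error term converging to $\frac{k(c_k+5)}{k+1}$ by Proposition~\ref{gzigzi}; Proposition~\ref{ddaqd} then yields the bound for some finite $j$. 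The missing idea in your proposal is precisely this feedback: the complement against which you balance must be recomputed from the current $T_1$, not fixed once from the initial $U_1$.
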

\begin{proof}
We recursively define a series $(T_1^{j},\ldots,T_{k+1}^{j})_{j \in \mathbb{N}}$ of spanning tree factorizations of $G$. First let $(T_1^{0},\ldots,T_{k+1}^{0})$ be an arbitrary \stf of $G$. Now suppose that we have already created $(T_1^{j},\ldots,T_{k+1}^{j})$ and want to create $(T_1^{j+1},\ldots,T_{k+1}^{j+1})$ for some $j \in \mathbb{N}$. First let $H^{j}=G-E(T_1^{j})$. Observe that $H^{j}$ is a $k$-multiple tree. Hence by assumption there is a spanning tree factorization $(U_2^{j},U_3^{j},\ldots,U_{k+1}^{j})$ of $H^{j}$ which satisfies $|d_{U_i^{j}}(v)-\frac{d_{H^{j}}(v)}{k}|\leq c_k$ for all $v \in V(G)$ and $i \in \{2,\ldots,k+1\}$. Now consider $F^{j}=(V(G),E(T_1^{j})\cup E(U_2^{j}))$. Observe that $F^{j}$ is a double tree. By Lemma \ref{rfde}, there is a \stf $(R_1^{j},R_2^{j})$ of $F^{j}$ such that $|d_{R_i^{j}}(v)-\frac{d_{F^{j}}(v)}{2}|\leq \frac{5}{2}$ for all $v \in V(G)$ and $i \in \{1,2\}$. Let $(T_1^{j+1},\ldots,T_{k+1}^{j+1})=(R_1^{j},R_2^{j},U_3^{j},\ldots,U_{k+1}^{j})$.
\medskip

We show in the following that for sufficiently large $j$, the \stf $(T_1^{j},\ldots,T_{k+1}^{j})$ has the desired properties.

For every $v \in V(G)$, we define a series $(x_v^{j})_{j \in \mathbb{N}}$ by $x_v^0=0$ and $x_v^{j+1}=x_v^j(\frac{1}{2}-\frac{1}{2k})+\frac{d_G(v)}{2k}-(\frac{c_k}{2}+\frac{5}{2})$ for all $j \geq 0$ and a series $(y_v^{j})_{j \in \mathbb{N}}$ by $y_v^0=d_G(v)$ and $y_v^{j+1}=y_v^j(\frac{1}{2}-\frac{1}{2k})+\frac{d_G(v)}{2k}+(\frac{c_k}{2}+\frac{5}{2})$ for all $j \geq 0$.

\begin{Claim}\label{gzuzugzu}
For all $v \in V(G)$ and $j \in \mathbb{N}$, we have $x_v^{j}\leq d_{T_1^{j}}(v)\leq y_v^{j}$.
\end{Claim}
\begin{proof}[Proof of Claim]
The statement is evident for $j=0$. Now suppose that the statement holds for all integers up to some $j \in \mathbb{N}$. By the definition of the spanning trees and $(\frac{1}{2}-\frac{1}{2k})\geq 0$, we obtain

\begin{align*}
d_{T_1^{j+1}}(v)&\geq \frac{d_{F^{j}}(v)}{2}-\frac{5}{2}\\
&=\frac{1}{2}(d_{T_1^{j}}(v)+d_{U_2^{j}}(v))-\frac{5}{2}\\
&\geq \frac{1}{2}(d_{T_1^{j}}(v)+\frac{d_{H^{j}}(v)}{k}-c_k)-\frac{5}{2}\\
&= \frac{1}{2}(d_{T_1^{j}}(v)+\frac{d_G(v)-d_{T_1^{j}}(v)}{k}-c_k)-\frac{5}{2}\\
&=(\frac{1}{2}-\frac{1}{2k})d_{T_1^{j}}(v)+\frac{d_G(v)}{2k}-\frac{c_k}{2}-\frac{5}{2}\\
&\geq (\frac{1}{2}-\frac{1}{2k})x_v^{j}+\frac{d_G(v)}{2k}-\frac{c_k}{2}-\frac{5}{2}\\
&=x_v^{j+1}
\end{align*}
 and similarly 

\begin{align*}
d_{T_1^{j+1}}(v)&\leq \frac{d_{F^{j}}(v)}{2}+\frac{5}{2}\\
&=\frac{1}{2}(d_{T_1^{j}}(v)+d_{U_2^{j}}(v))+\frac{5}{2}\\
&\leq \frac{1}{2}(d_{T_1^{j}}(v)+\frac{d_{H^{j}}(v)}{k}+c_k)+\frac{5}{2}\\
&= \frac{1}{2}(d_{T_1^{j}}(v)+\frac{d_G(v)-d_{T_1^{j}}(v)}{k}+c_k)+\frac{5}{2}\\
&=(\frac{1}{2}-\frac{1}{2k})d_{T_1^{j}}(v)+\frac{d_G(v)}{2k}+\frac{c_k}{2}+\frac{5}{2}\\
&\leq (\frac{1}{2}-\frac{1}{2k})y_v^{j}+\frac{d_G(v)}{2k}+\frac{c_k}{2}+\frac{5}{2}\\
&=y_v^{j+1}.
\end{align*}
\renewcommand{\qedsymbol}{$\blacksquare$}
\end{proof}
For every $v \in V(G)$, we now define $x_v=\frac{d_G(v)}{k+1}-\frac{k(c_k+5)}{k+1}$ and $y_v=\frac{d_G(v)}{k+1}+\frac{k(c_k+5)}{k+1}$.

\begin{Claim}
For all $v \in V(G)$, there is some $n_v \in \mathbb{N}$ such that $x_v \leq d_{T_1^{j}}(v)\leq y_v$ for all $j \geq n_v$.
\end{Claim}
\begin{proof}[Proof of Claim]
Let $v \in V(G)$. By Proposition \ref{gzigzi}, we have 
\begin{align*}
\lim_{j \rightarrow \infty}x_v^{j}&=\frac{\frac{d_G(v)}{2k}-(\frac{c_k}{2}+\frac{5}{2})}{1-(\frac{1}{2}-\frac{1}{2k})}\\
&=\frac{\frac{d_G(v)}{2k}-(\frac{c_k}{2}+\frac{5}{2})}{\frac{k+1}{2k}}\\
&=\frac{d_G(v)}{k+1}-\frac{k(c_k+5)}{k+1}\\
&=x_v
\end{align*}

and

\begin{align*}
\lim_{j \rightarrow \infty}y_v^{j}&=\frac{\frac{d_G(v)}{2k}+(\frac{c_k}{2}+\frac{5}{2})}{1-(\frac{1}{2}-\frac{1}{2k})}\\
&=\frac{\frac{d_G(v)}{2k}+(\frac{c_k}{2}+\frac{5}{2})}{\frac{k+1}{2k}}\\
&=\frac{d_G(v)}{k+1}+\frac{k(c_k+5)}{k+1}\\
&=y_v.
\end{align*}
By Claim \ref{gzuzugzu}, we have $d_{T_1^{j}}(v)\geq x_v^{j}$ for all $j \in \mathbb{N}$. Further observe that $d_{T_1^{j}}(v)\in \{0,\ldots,d_G(v)\}$  for all $j \in \mathbb{N}$. Hence by Proposition \ref{ddaqd}, we obtain that there is some $n_0 \in \mathbb{N}$ such that $d_{T_1^{j}}(v)\geq x_v$ for all $j \geq n_0$. 

Further, by Claim \ref{gzuzugzu}, we have $d_{T_1^{j}}(v)\leq y_v^{j}$ for all $j \in \mathbb{N}$. By applying Proposition \ref{ddaqd} to $(-y_v^{j})_{j \in \mathbb{N}},-y_v$ and $(-d_{T_1^{j}}(v))_{j \in \mathbb{N}}$, we obtain that there is some $n_1 \in \mathbb{N}$ such that $d_{T_1^{j}}(v)\leq y_v$ for all $j \geq n_1$. 

Choosing $n_v=\max\{n_0,n_1\}$ finishes the proof.
\renewcommand{\qedsymbol}{$\blacksquare$}
\end{proof}
Now let $n^*=\max_{v \in V(G)}n_v$ and $(T_1,\ldots,T_{k+1})=(T_1^{n^*},\ldots,T_{k+1}^{n^*})$. Clearly, $(T_1,\ldots,T_{k+1})$ is a \stf of $G$. Further, for every $v \in V(G)$, we have $d_{T_1}(v)-\frac{d_G(v)}{k+1}\leq y_v-\frac{d_G(v)}{k+1}=\frac{k(c_k+5)}{k+1}$ and $\frac{d_G(v)}{k+1}-d_{T_1}(v)\leq\frac{d_G(v)}{k+1}-x_v=\frac{k(c_k+5)}{k+1}$. This finishes the proof.
\end{proof}
We are now ready to give the second main lemma of this section.
\begin{Lemma}\label{rdztd}
Suppose that for some positive integer $k$, there exists a $k$-feasible constant $c_k$. Then $(1+\frac{1}{k+1})c_k+5$ is a $(k+1)$-feasible constant.
\end{Lemma}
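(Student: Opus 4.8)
The plan is to use Lemma \ref{baum} to control the degrees in the single tree $T_1$, and then apply the $k$-feasibility assumption to the remaining $k$-multiple tree $G - E(T_1)$ to control the degrees in $T_2,\ldots,T_{k+1}$. Concretely, let $G$ be a $(k+1)$-multiple tree. By Lemma \ref{baum}, there is a \stf $(T_1,T_2,\ldots,T_{k+1})$ of $G$ such that $|d_{T_1}(v)-\frac{d_G(v)}{k+1}|\leq \frac{k(c_k+5)}{k+1}$ for all $v \in V(G)$. I would then discard the spanning trees $T_2,\ldots,T_{k+1}$ and instead work with $H = G - E(T_1)$, which is a $k$-multiple tree; by the $k$-feasibility of $c_k$, there is a \stf $(S_2,\ldots,S_{k+1})$ of $H$ with $|d_{S_i}(v)-\frac{d_H(v)}{k}|\leq c_k$ for all $v$ and all $i$. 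Setting $(T_1,S_2,\ldots,S_{k+1})$ as the final factorization, it remains to bound $|d_{T_1}(v)-\frac{d_G(v)}{k+1}|$ (already done) and $|d_{S_i}(v)-\frac{d_G(v)}{k+1}|$ for $i \in \{2,\ldots,k+1\}$.

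For the latter bound, the key identity is $d_H(v) = d_G(v) - d_{T_1}(v)$, so
\[
d_{S_i}(v) - \frac{d_G(v)}{k+1} = \left(d_{S_i}(v) - \frac{d_H(v)}{k}\right) + \frac{1}{k}\left(d_G(v) - d_{T_1}(v)\right) - \frac{d_G(v)}{k+1}.
\]
I would rewrite the last two terms as $\frac{1}{k}\left(\frac{d_G(v)}{k+1} - d_{T_1}(v)\right) + \frac{d_G(v)}{k(k+1)} + \frac{d_G(v)}{k+1}\cdot(\frac{1}{k}-\frac{1}{k}) $ — more cleanly, note $\frac{d_G(v)}{k} - \frac{d_G(v)}{k+1} = \frac{d_G(v)}{k(k+1)}$ and group so that the $d_{T_1}$-dependence becomes $-\frac{1}{k}(d_{T_1}(v) - \frac{d_G(v)}{k+1})$. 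Taking absolute values and applying the triangle inequality then gives
\[
\left|d_{S_i}(v) - \frac{d_G(v)}{k+1}\right| \leq c_k + \frac{1}{k}\cdot\frac{k(c_k+5)}{k+1} = c_k + \frac{c_k+5}{k+1} = \left(1+\frac{1}{k+1}\right)c_k + \frac{5}{k+1} \leq \left(1+\frac{1}{k+1}\right)c_k + 5,
\]
which is exactly the claimed $(k+1)$-feasible constant. Since the bound for $T_1$ is $\frac{k(c_k+5)}{k+1} = (c_k+5) - \frac{c_k+5}{k+1} \leq c_k + 5 \leq (1+\frac{1}{k+1})c_k + 5$ (using $c_k \geq 0$, which holds since a $k$-feasible constant is nonnegative), both parts meet the required bound.

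The only subtlety — and the step I would be most careful about — is the algebraic rearrangement linking the error in $d_{S_i}$ at scale $\frac{d_H(v)}{k}$ to the error in $d_{T_1}$ at scale $\frac{d_G(v)}{k+1}$; one must make sure the factor multiplying the $T_1$-error is exactly $\frac{1}{k}$ and that no stray $d_G(v)$ term survives. Writing $a = d_{T_1}(v)$, $D = d_G(v)$, the computation is $\frac{D-a}{k} - \frac{D}{k+1} = \frac{D}{k(k+1)} - \frac{a}{k} = -\frac{1}{k}\left(a - \frac{D}{k+1}\right) + \frac{D}{k(k+1)} - \frac{D}{k(k+1)} = -\frac{1}{k}\left(a - \frac{D}{k+1}\right)$, confirming it is clean. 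Everything else is a direct application of Lemma \ref{baum}, the $k$-feasibility hypothesis, and the triangle inequality, so no further obstacle arises.
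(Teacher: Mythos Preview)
Your proposal is correct and follows essentially the same route as the paper: apply Lemma~\ref{baum} to pin down $T_1$, use the $k$-feasibility hypothesis on $H=G-E(T_1)$ to get $T_2,\ldots,T_{k+1}$, and bound $|d_{T_i}(v)-\tfrac{d_G(v)}{k+1}|$ via the identity $\tfrac{d_H(v)}{k}-\tfrac{d_G(v)}{k+1}=-\tfrac{1}{k}\bigl(d_{T_1}(v)-\tfrac{d_G(v)}{k+1}\bigr)$ together with the triangle inequality. The only cosmetic difference is that the paper checks $\tfrac{k(c_k+5)}{k+1}\le (1+\tfrac{1}{k+1})c_k+5$ directly rather than via your intermediate step $(c_k+5)-\tfrac{c_k+5}{k+1}\le c_k+5$; your justification that $c_k\ge 0$ (which indeed follows from the definition, since $|\,\cdot\,|\ge 0$ forces any feasible constant to be nonnegative) is fine here.
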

\begin{proof}
Let $G$ be a $(k+1)$-multiple tree. We need to prove that there exists a \stf $(T_1,\ldots,T_{k+1})$ of $G$ such that $|d_{T_i}(v)-\frac{d_G(v)}{k+1}|\leq (1+\frac{1}{k+1})c_k+5$ for all $v \in V(G)$ and $i \in \{1,\ldots,k+1\}$. By Lemma \ref{baum}, there is a \stf $(T_1,U_2,\ldots,U_{k+1})$ of $G$ such that $|d_{T_1}(v)-\frac{d_G(v)}{k+1}|\leq \frac{k(c_k+5)}{k+1}$ for all $v \in V(G)$. Let $H=G-E(T_1)$. Observe that $H$ is a $k$-multiple tree. By assumption, there is a spanning tree factorization $(T_2,\ldots,T_{k+1})$ of $H$ such that $|d_{T_i}(v)-\frac{d_H(v)}{k}|\leq c_k$ for all $v \in V(G)$ and $i \in \{2,\ldots,k+1\}$. We will show that $(T_1,\ldots,T_{k+1})$ is a \stf of $G$ with the desired properties. First observe that $|d_{T_1}(v)-\frac{d_G(v)}{k+1}|\leq \frac{k(c_k+5)}{k+1}\leq (1+\frac{1}{k+1})c_k+5$ for all $v \in V(G)$. Further, for all $v \in V(G)$ and $i \in \{2,\ldots,k+1\}$, we obtain

\begin{align*}
|d_{T_i}(v)-\frac{1}{k+1}d_G(v)|&=|d_{T_i}(v)-\frac{1}{k}d_G(v)+\frac{1}{k(k+1)}d_G(v)|\\
&=|d_{T_i}(v)-\frac{1}{k}d_H(v)-(\frac{1}{k}d_{T_1}(v)-\frac{1}{k(k+1)}d_G(v))|\\
&\leq |d_{T_i}(v)-\frac{1}{k}d_H(v)|+\frac{1}{k}|d_{T_1}(v)-\frac{1}{k+1}d_G(v)|\\
&\leq c_k+\frac{1}{k}\frac{k(c_k+5)}{k+1}\\
&\leq c_k(1+\frac{1}{k+1})+5.
\end{align*}
This finishes the proof.
\end{proof}

We are now ready to prove the following result which will imply Theorem \ref{newmain} after.

\begin{Theorem}\label{tech}
For every positive integer $k$, $11 \log (k)$ is a $k$-feasible constant.
\end{Theorem}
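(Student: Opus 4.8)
The plan is to prove this by strong induction on $k$, using the three recursive tools established above: Lemma \ref{rfde} as the base, Lemma \ref{uhhu} to double, and Lemma \ref{rdztd} to increment by one. First I would verify the small cases directly: for $k=1$, Lemma \ref{rfde} gives that $0$ is $1$-feasible, and $0 \leq 11\log(1) = 0$; for $k=2$, Lemma \ref{rfde} gives that $\frac{5}{2}$ is $2$-feasible, and $\frac{5}{2} \leq 11\log(2) = 11$. Since feasibility is monotone in the constant (a larger constant is also feasible), it suffices in each case to exhibit \emph{some} feasible constant that is at most $11\log(k)$.

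For the inductive step with $k \geq 3$, I would split into two cases according to the parity of $k$. If $k$ is even, write $k = 2\mu$ with $\mu \geq 2$; by the induction hypothesis $11\log(\mu)$ is $\mu$-feasible, so by Lemma \ref{uhhu} the constant $11\log(\mu) + \frac{5}{2}$ is $2\mu$-feasible, i.e. $k$-feasible. It remains to check $11\log(\mu) + \frac{5}{2} \leq 11\log(2\mu) = 11\log(\mu) + 11$, which is immediate since $\frac{5}{2} \leq 11$. If $k$ is odd, write $k = \mu + 1$ with $\mu = k-1 \geq 2$; by the induction hypothesis $c_\mu := 11\log(\mu)$ is $\mu$-feasible, so by Lemma \ref{rdztd} the constant $(1 + \frac{1}{\mu+1})c_\mu + 5$ is $(\mu+1)$-feasible, i.e. $k$-feasible. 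Here I would need $(1+\frac{1}{\mu+1})\cdot 11\log(\mu) + 5 \leq 11\log(\mu+1)$, equivalently $\frac{11\log(\mu)}{\mu+1} + 5 \leq 11(\log(\mu+1) - \log(\mu)) = 11\log\!\big(1+\frac{1}{\mu}\big)$. This last inequality is exactly where Proposition \ref{loga} does the heavy lifting: it asserts $\frac{11\log(\mu) + 5/2}{2\mu+1} \leq \frac{7}{2}$, which after rearrangement bounds $\frac{11\log(\mu)}{\mu+1}$ suitably; combined with a lower bound on $11\log(1+\frac1\mu)$ of the form $\geq \frac{11}{(\mu+1)\ln 2}$ or a cruder elementary estimate, the odd step goes through. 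I expect this odd-case estimate to be the main obstacle — not conceptually deep, but the one place where the constant $11$ is genuinely tight and one must use Proposition \ref{loga} (and possibly the concavity/derivative bound $\log(1+t)\geq \frac{t}{\ln 2}-\frac{t^2}{2\ln 2}$ or simply handle $\mu = 2$ by hand) rather than a one-line comparison.

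Finally, I would record that the case analysis is exhaustive: every $k \geq 3$ is either even (handled by the doubling step, since $k/2 < k$) or odd with $k-1 \geq 2$ (handled by the increment step), so strong induction applies. Assembling the three computations above yields that $11\log(k)$ is $k$-feasible for every positive integer $k$, completing the proof. The only subtlety worth flagging in the write-up is ensuring the base cases cover $k \in \{1,2\}$ so that the even step for $k=4$ (using $\mu=2$) and the odd step for $k=3$ (using $\mu=2$) both have a valid hypothesis to appeal to.
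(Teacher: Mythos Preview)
Your even case is fine and matches the paper. The odd case, however, has a genuine gap: the inequality you need,
\[
\Bigl(1+\tfrac{1}{\mu+1}\Bigr)\,11\log(\mu)+5\;\le\;11\log(\mu+1),
\]
is simply false for every $\mu\ge 2$. Rearranged it reads $\tfrac{11\log(\mu)}{\mu+1}+5\le 11\log\!\bigl(1+\tfrac{1}{\mu}\bigr)$, and the right-hand side tends to $0$ as $\mu\to\infty$ while the left-hand side is at least $5$. Already at $\mu=2$ (your first odd case $k=3$) the left side is $\tfrac{44}{3}+5\approx 19.7$ and the right side is $11\log 3\approx 17.4$. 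Proposition~\ref{loga} cannot rescue this: it only bounds $\tfrac{11\log(\mu)+5/2}{2\mu+1}$ by $\tfrac{7}{2}$, which says nothing useful about an inequality whose right-hand side vanishes.

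The fix, and what the paper actually does, is not to feed the induction-hypothesis constant $11\log(k-1)$ into Lemma~\ref{rdztd} for odd $k$. Instead, since $k-1$ is even, write $k-1=2\mu$ and first apply Lemma~\ref{uhhu} to the inductively known $\mu$-feasible constant $11\log(\mu)$; this gives the \emph{sharper} $(k-1)$-feasible constant $11\log(\mu)+\tfrac{5}{2}$, which is $11\log(k-1)-\tfrac{17}{2}$ rather than $11\log(k-1)$. Only then apply Lemma~\ref{rdztd}. Now Proposition~\ref{loga} is exactly tailored to this situation (note $2\mu+1=k$), and one gets
\[
\Bigl(1+\tfrac{1}{2\mu+1}\Bigr)\bigl(11\log(\mu)+\tfrac{5}{2}\bigr)+5
\;\le\;11\log(\mu)+\tfrac{5}{2}+\tfrac{7}{2}+5
\;=\;11\log(2\mu)\;\le\;11\log k.
\]
The conceptual point you missed is that the additive $+5$ in Lemma~\ref{rdztd} is too expensive to absorb into a single increment of the logarithm; you must first ``bank'' the savings from the doubling step before spending them on the increment step.
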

\begin{proof}
The statement holds for $k=1$ by Lemma \ref{rfde}. Now suppose that the statement holds for all integers up to some $k$. We show that it also holds for $k+1$.

First suppose that $k$ is odd, so $k+1=2 \mu$ for some positive integer $\mu$. Recursively, we obtain that $11 \log (\mu)$ is a $\mu$-feasible constant. Now Lemma \ref{uhhu} yields that $11 \log (\mu) +\frac{5}{2}$ is a $(k+1)$-feasible constant. Further, we have $11 \log (\mu) +\frac{5}{2} \leq 11(\log (\mu )+1)=11(\log (\mu) +\log (2))=11\log(2 \mu)=11 \log(k+1)$.

Now suppose that $k$ is even, so $k=2 \mu$ for some positive integer $\mu$. By Lemma \ref{uhhu} and recursively, we obtain that  $11 \log (\mu) +\frac{5}{2}$ is a $k$-feasible constant and hence by Lemma \ref{rdztd}, we obtain that $(1+\frac{1}{2 \mu +1})(11 \log (\mu) +\frac{5}{2})+5$ is a $(k+1)$-feasible constant. Further, by Proposition \ref{loga}, we have

\begin{align*}
(1+\frac{1}{2 \mu +1})(11 \log (\mu) +\frac{5}{2})+5&=11 \log (\mu)+\frac{11 \log (\mu) +\frac{5}{2}}{2 \mu +1}+\frac{5}{2}+5\\
&\leq 11 \log (\mu) + 11\\
& = 11(\log (\mu)+\log (2))\\
&=11\log (2 \mu)\\
&\leq 11 \log (k+1). 
\end{align*}
\end{proof}

We finally show that Theorem \ref{tech} indeed implies Theorem \ref{newmain}.

\begin{proof}(of Theorem \ref{newmain})

Let $G$ be a $k$-multiple tree. By Theorem \ref{tech}, there is a \stf $(T_1,\ldots,T_k)$ of $G$ such that $|d_{T_i}(v)-\frac{d_G(v)}{k}|\leq 11 \log(k)$ for all $v \in V(G)$ and $i \in \{1,\ldots,k\}$. For all $v \in V(G)$ and $i,j \in \{1,\ldots,k\}$, we obtain $|d_{T_i}(v)-d_{T_j}(v)|\leq |d_{T_i}(v)-\frac{d_G(v)}{k}|+|d_{T_j}(v)-\frac{d_G(v)}{k}|\leq 11 \log(k)+11 \log(k)=22 \log(k)$. Hence $(T_1,\ldots, T_k)$ is a \stf with the desired properties.
\end{proof}
\section{Conclusion}\label{conc}

We manage to settle Conjecture \ref{ddfd}. We first give a proof for the case of $k=2$ which also includes an improvement for the constant in Problem \ref{dggfd}. While the result on Conjecture \ref{ddfd} seems to be the much more general one of these contributions, the improvement of the constant is the hardest part of the proof of  Theorem \ref{main}.

After, we give a proof of the general version of Conjecture \ref{ddfd} by applying Theorem \ref{main} repeatedly.

Certainly, this work leaves many open questions. It would be good to know whether the obtained bounds can be improved. Two questions about this improvement seem particularly interesting.
Firstly, one could ask whether the constant in Theorem \ref{main} can be improved. 
\begin{Problem}\label{prob5}
What is the minimum integer $c$ for which every double tree $G$ has a spanning tree factorization $(T_1,T_2)$ such that $|d_{T_1}(v)-d_{T_2}(v)|\leq c$ for all $v \in V(G)$?
\end{Problem}

By Theorem \ref{main}, we have $c \leq 5$ and the construction given by Kriesell in \cite{nk1} shows that $c \geq 2$. Clearly, the value of this constant plus 2 is at least the value of the constant in Problem \ref{feuille}. 

Secondly, we would like to know whether in Theorem \ref{newmain} $c_k$ can be replaced by a global constant.

\begin{Conjecture}
There is an integer $c$ such that every $k$-multiple tree $G$ has a \stf $(T_1,\ldots,T_k)$ such that $|d_{T_i}(v)-d_{T_j}(v)|\leq c$ for all $v \in V(G)$ and $i,j \in \{1,\ldots,k\}$.
\end{Conjecture}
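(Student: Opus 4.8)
The plan is to prove the statement by strong induction on $k$, combining the three building blocks already established: Lemma \ref{rfde} for the base case, Lemma \ref{uhhu} to pass from a $\mu$-feasible constant to a $2\mu$-feasible one, and Lemma \ref{rdztd} to pass from a $k$-feasible constant to a $(k+1)$-feasible one. The base case $k = 1$ is immediate, since $11\log 1 = 0$ and $0$ is $1$-feasible by Lemma \ref{rfde}. For the inductive step I assume that $11\log m$ is $m$-feasible for every positive integer $m \le k$, and I produce a $(k+1)$-feasible constant that is at most $11\log(k+1)$.

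The inductive step splits according to the parity of $k$. If $k$ is odd, write $k+1 = 2\mu$; then $1 \le \mu \le k$, so by the induction hypothesis $11\log\mu$ is $\mu$-feasible, and Lemma \ref{uhhu} shows that $11\log\mu + \frac{5}{2}$ is a $(k+1)$-feasible constant. Since $\frac{5}{2} \le 11 = 11\log 2$, this constant is at most $11\log\mu + 11\log 2 = 11\log(2\mu) = 11\log(k+1)$, as desired. If $k$ is even, write $k = 2\mu$ with $1 \le \mu \le k$; the induction hypothesis gives that $11\log\mu$ is $\mu$-feasible, Lemma \ref{uhhu} upgrades this to $11\log\mu + \frac{5}{2}$ being $k$-feasible, and then Lemma \ref{rdztd}, applied with $k = 2\mu$ so that $k+1 = 2\mu+1$, yields that $(1 + \frac{1}{2\mu+1})(11\log\mu + \frac{5}{2}) + 5$ is a $(k+1)$-feasible constant.

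It remains to verify that the constant obtained in the even case does not exceed $11\log(k+1)$, and this is exactly where Proposition \ref{loga} is used: expanding gives $11\log\mu + \frac{11\log\mu + 5/2}{2\mu+1} + \frac{5}{2} + 5$, the middle term is at most $\frac{7}{2}$ by Proposition \ref{loga}, and hence the whole constant is at most $11\log\mu + \frac{7}{2} + \frac{5}{2} + 5 = 11\log\mu + 11 = 11\log(2\mu) = 11\log k \le 11\log(k+1)$. The one genuine subtlety, and what I expect to be the main point to get right, is the bookkeeping of how the two recursions interleave: Lemma \ref{rdztd} carries a multiplicative blow-up factor $1 + \frac{1}{k+1}$, so iterating it would destroy any logarithmic bound. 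The argument survives because between any two consecutive doublings Lemma \ref{rdztd} is invoked at most once — every odd $k+1$ is reached by doubling $\mu = k/2$ up to $k$ and then incrementing a single time — so the multiplicative errors never compound, and Proposition \ref{loga} is precisely the inequality certifying that this single increment remains affordable.
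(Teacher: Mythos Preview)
Your argument is correct as a proof of Theorem \ref{tech}, but it does not prove the statement you were asked to prove. The Conjecture asks for a \emph{single} integer $c$, independent of $k$, that works for every $k$-multiple tree simultaneously; the paper explicitly poses this as an open question in Section \ref{conc} (``we would like to know whether in Theorem \ref{newmain} $c_k$ can be replaced by a global constant''). What your induction produces is the bound $11\log k$ (and hence $22\log k$ on $|d_{T_i}(v)-d_{T_j}(v)|$), which grows without bound as $k\to\infty$. This is precisely the content of Theorem \ref{tech} and Theorem \ref{newmain}, and your write-up follows the paper's own proof of Theorem \ref{tech} essentially step for step.

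The gap is therefore not a technical slip but a misidentification of the target: you have reproved the logarithmic bound, not the conjectured uniform one. No amount of bookkeeping with Lemmas \ref{uhhu} and \ref{rdztd} can yield a constant independent of $k$, because each application of Lemma \ref{uhhu} adds $\tfrac{5}{2}$ and you need $\Theta(\log k)$ doublings to reach $k$. A proof of the Conjecture would require a genuinely new idea beyond the machinery in the paper.
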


Another question one could ask is of algorithmic nature. We wish to know whether we can decide efficiently if a given double tree has a \stf which is in some way perfectly balanced.

\begin{Problem}\label{prob2}
Can we decide in polynomial time whether a given eulerian double tree $G$ has a \stf $(T_1,T_2)$ such that $d_{T_1}(v)=d_{T_2}(v)$ for all $v \in V(G)$?
\end{Problem}

Further, one could consider the following natural generalization to infinite graphs.

\begin{Problem}\label{prob3}
Is there an integer $c$ such that every infinite double tree $G$ has a \stf $(T_1,T_2)$ such that $|d_{T_1}(v)-d_{T_2}(v)|\leq c$ for all $v \in V(G)$?
\end{Problem}

Finally, a similar question can be asked in digraphs.

\begin{Problem}\label{prob4}
Is there an integer $c$ for which every digraph whose arc set can be factorized into two spanning $r$-arborescences for some $r \in V(G)$ can be factorized into two such arboresences $A_1,A_2$ satisfying $|d_{A_1}^+(v)-d_{A_2}^+(v)|\leq c$ for all $v \in V(G)$?
\end{Problem}
Clearly, Problems \ref{prob2}, \ref{prob3} and \ref{prob4} can also be asked in the more general setting when a factorization into $k$ objects for arbitrary $k$ is searched for. Also, for Problems \ref{prob3} and \ref{prob4}, approximate results would already be interesting. 
\medskip

We wish to mention that during the submission time of this paper, some progress on the above mentioned problems has been made by Illingworth, Powierski, Scott and Tamitegama in \cite{ipst}. In particular, negative answers for Problems \ref{prob2} and \ref{prob4} haven been provided and some progress on Problems \ref{prob5} and \ref{prob3} has been made.

\section*{Acknowledgements}
I wish to thank Daniel Gonçalves and Louis Esperet for pointing me to the idea of proving Conjecture \ref{ddfd} by applying Theorem \ref{main} multiple times. I further wish to thank Kalina Petrova for pointing out a mistake in an earlier version of this article. I would also like to thank Alberto Espuny D\'{i}az for the helpful discussions.

\end{document}